\theoremstyle{plain}
\newtheorem{theorem}{Theorem}
\newtheorem{lemma}[theorem]{Lemma}
\theoremstyle{definition}
\newtheorem{definition}[theorem]{Definition}
\newtheorem{example}[theorem]{Example}
\newtheorem{remark}[theorem]{Remark}
\newcommand{\N}{{\mathbb N}}
\newcommand{\Z}{{\mathbb Z}}
\newcommand{\D}{{\mathcal D}}
\newcommand{\T}{{\mathcal T}}
\title{ Asymmetric extension of Pascal-Dellanoy triangles}
\author{Said Amrouche, Hac\`{e}ne Belbachir\\
USTHB, Faculty of Mathematics, RECITS Laboratory, BP 32, El Alia, 16111, Bab Ezzouar,
Algiers, Algeria\\
saidamrouchee@gmail.com, hbelbachir@usthb.dz  
}
\date{}
\begin{document}
\maketitle
\begin{abstract}
We give a generalization of the Pascal triangle called the quasi s-Pascal triangle where the sum of the elements crossing the diagonal rays produce the s-bonacci sequence. For this, consider a lattice path in the plane whose step set is $\lbrace L=(1,0), L_{1}=(1,1), L_{2}=(2,1),\ldots, L_{s}=(s,1)\rbrace $; an explicit formula is given. Thereby linking  the elements of the quasi s-Pascal triangle with the bi$^{s}$nomial coefficients. We establish the recurrence relation for the sum of elements lying over any finite ray of the quasi s-Pascal triangle. The generating function of the cited sums is produced. We also give identities among which one equivalent to the de Moivre sum and establish a q-analogue of the coefficient of the quasi s-Pascal triangle. \\
\\
\textbf{Keywords} { Generalized Pascal triangle, Recurrence relations, Generating function, q-analogue.}\\
\\
\textbf{Mathematics Subject Classification (2010)} \ {5A10, 11B39, 11B37.}
\end{abstract}\section{Introduction}

A lattice path in the plane-$(x,y)$ is a set of edges $\lbrace{p_0},p_{1},\ldots,p_{n}\rbrace$ in $\Z^{2}$, such that two edges are related by one vertex, the set of vertices connecting $p_{0}$ to $p_{n}$ is called a lattice path. Several authors studied and enumerated the lattice path, Mohanty and Handa \cite{MH} enumerate the unrestricted lattice paths from $(0,0)$ to $(n,k)$ where $u$ diagonal steps are allowed at each position, Dziemianczuk \cite{D} count the lattice path with four steps horizontal $H=(1,0)$, vertical $V=(0,1)$, diagonal $D=(1,1)$, and sloping $L=(-1,1)$,  Fray and Roselle \cite{FR} determine the number of unrestricted weighted lattice paths from $(0,0)$ to $(n,k)$ with horizontal, vertical, and diagonal steps, Rohatgi \cite{R} enumerate the paths which must remain below the line $y = ax + b$ where the diagonal steps are allowed in addition to the usual horizontal and vertical steps. In a Pascal triangle, the binomial coefficients  $\binom {n}{k}$ count the number of lattice paths from $(0,0)$ to $(n,k)$ using the steps $\lbrace H=(1,0)\rightarrow, D=(1,1) \nearrow \rbrace$, see {\scriptsize{SLOANE}} \cite{S1} $A007318$.\\
It is well known that the terms of  Fibonacci sequence $(F_{n})_{n}$ are obtained by summing the elements crossing the principal diagonal rays in the Pascal triangle,
\begin{align}
F_{n+1}=\sum_{k}\binom{n-k}{k},
\end{align}
where $\binom{n}{k}=\frac{n!}{k!(n-k)!}$ for $n\geq k \geq 0$ and $\binom{n}{k}=0$ otherwise.\\
The generating function of the binomial coefficients is given by
\begin{align}
\sum_{n\geq0}\binom{n}{k}x^{n}=\frac{x^{k}}{(1-x)^{k+1}}.
\end{align}
Alladi and Hoggat \cite{tribo1}  extended the Pascal triangle. They established the Tribonacci triangle and proved that the sum of elements lying over the principal diagonal rays in the Tribonacci triangle gives the Tribonacci sequence 
\begin{align*}
T_{n+1}=T_{n}+T_{n-1}+T_{n-2},
\end{align*}
with $T_{0}=0, T_{1}=1, T_{2}=1$.\\
Denote by $\binom{n}{k}_{[2]}$ the element in the $n^{th}$ row and $k^{th}$ column of the Tribonacci triangle, the triangle is produced by the recurrence relation
\begin{align*}
\binom{n}{k}_{[2]}=\binom{n-1}{k}_{[2]}+\binom{n-1}{k-1}_{[2]}+\binom{n-2}{k-1}_{[2]},
\end{align*}
where  $\binom{n}{0}_{[2]}$= $\binom{n}{n}_{[2]}=1.$ We use the convention $\binom{n}{k}_{[2]}=0$ for $k\notin
\left \{ 0,\ldots ,n\right \}.$\\
Moreover, Barry \cite{PB} has shown that for $0\leq k\leq n$ these coefficients satisfy the relation
\begin{align}
\binom{n}{k}_{[2]}=\sum_{j=0}^{k}\binom kj \binom{n-j}{k}.
\end{align}
\[
\begin{tabular}{|c|ccccccccccc}
\hline
$_{\mathbf{n}\backslash \mathbf{k}}$ & \textbf{0} & \textbf{1} & \textbf{2}
& \textbf{3} & \textbf{4} & \textbf{5} & \textbf{6} & \textbf{7} & \textbf{8}
& \textbf{9} \\ \hline
\textbf{0} & 1\  &  &  &  &  &  &  &  &  &  &\\ 
\textbf{1} & 1 & 1 &  &  &  &  &  &  &  & & \\ 
\textbf{2} & 1 & 3 & 1 &  &  &  &  &  &  &  &\\ 
\textbf{3} & 1 & 5 & 5 & 1 &  &  &  &  &  &  &\\ 
\textbf{4} & 1 & 7 & 13 & 7 & 1 &  &  &  &  &  & \\ 
\textbf{5} & 1 & 9& 25 & 25 & 9 & 1 &  &  &  & & \\ 
\textbf{6} & 1 & 11 & 41 & 63 & 41 & 11 & 1 &  &  &  &\\ 
\textbf{7} & 1 & 13 & 61 & 129 &129 & 61 & 13 & 1 &  &  &\\ 
\textbf{8} & 1 & 15 & 85 & 231 & 321 & 231 & 85 & 15 & 1 &  &\\ 
\textbf{9} & 1 & 17 & 113 & 377 & 681 & 681 & 377 & 113 & 17 & 1 &
\end{tabular}
\]
\begin{center}
\text{Table 1. Tribonacci triangle.\smallskip} 
\end{center}
We find in {\scriptsize{SLOANE}} \cite{S1}  A008288 that $\binom{n}{k}_{[2]}$ counts the number of lattice paths from $(0,0)$ to $(n,k)$ using the steps $\lbrace H=(1,0), D=(1,1), L=(2,1)  \rbrace$.\\
In what follows $s$ is a positive integer.
\subsection{ The s-Pascal triangle}  Let  $ k \in \lbrace 0,1,\ldots, sn \rbrace$, the bi$^{s}$nomial coefficient $\binom{n}{k}_{s}$ is defined as the $k^{th}$ coefficient in the expansion\\
$$(1+x+x^{2}+\cdots+x^{s})^{n}=\sum_{k \in \Z}\binom{n}{k}_{s}x^{k},$$
with $\binom{n}{k}_{s}=0$ for $k >sn$ or $k < 0$.\\
Using the classical binomial coefficient, see \cite{A,HBB,B}, one has\\
\begin{align}
\binom{n}{k}_{s}=\sum_{j_{1}+j_{2}+\cdots+j_{s}=k}\binom{n}{j_{1}}\binom{j_{1}}{j_{2}}\cdots\binom{j_{s-1}}{j_{s}}.
\end{align}
Some other readily well known established properties are:\\
the symmetry relation\\
\begin{align}
\binom{n}{k}_{s}=\binom{n}{sn-k}_{s},
\end{align}
the longitudinal recurrence relation
\begin{align}
\binom{n}{k}_{s}=\sum_{j=0}^{s}\binom{n-1}{k-j}_{s},
\end{align}
 the diagonal recurrence relation
\begin{align}
\binom{n}{k}_{s}=\sum_{j=0}^{n}\binom{n}{j}\binom{j}{k-j}_{s-1},
\end{align}
and de Moivre expression see \cite{AD,AD1}
 \begin{align}\label{js}
\binom{n}{k}_{s}=\sum_{j=0}(-1)^{j}\binom{n}{j}\binom{k-j(s+1)+n-1}{n-1}.
\end{align}
Belbachir and Benmezai \cite{bb} gave the following relation
\begin{align}\label{bb1}
\binom{n}{k}_{s}=(-1)^{k}\sum_{j_{1}+j_{2}+\cdots+j_{s}=k}\binom{n}{j_{1}}\binom{n}{j_{2}}\cdots\binom{n}{j_{s}}a^{-\sum_{r=1}^{s}rj_{r}},
\end{align}
where $a=\exp{(2i\pi/(s+1))}$.\\
These coefficients, as for the usual binomial coefficients, are built, as for the Pascal triangle, the
called  s-Pascal  triangle. One can find the first values of the s-Pascal  triangle in {\scriptsize{SLOANE}}   $A027907$ for $s = 2$,  $A008287$ for $s = 3$ and  $A035343$ for $s = 4$.\\
\
\[
\begin{tabular}{|c|ccccccccccccccc}
\hline
$_{\mathbf{n}\backslash \mathbf{k}}$ & \textbf{0} & \textbf{1} & \textbf{2}
& \textbf{3} & \textbf{4} & \textbf{5} & \textbf{6} & \textbf{7} & \textbf{8} & \textbf{9} & \textbf{10}& \textbf{11}& \textbf{12}& \textbf{13}
\\ \hline
  \textbf{0} & 1\  &  &  &  &  &  &  &  &  &  &  & & &  \\ 
\textbf{1} & 1 & 1 & 1 & 1  &  &  &  &  &  &  &  &  & &  \\ 
\textbf{2} & 1 & 2 & 3 & 4 & 3 & 2 & 1 &  &  &  &  &  &     \\ 
\textbf{3} & 1 & 3 & 6 & 10 & 12 & 12& 10 & 6 & 3 & 1 &  &  &  &   \\ 
\textbf{4} & 1 & 4 & 10 & 20 &31 & 40 & 44 & 40 &31  & 20 & 10& 4 & 1 &  \\ 
\textbf{5} & 1 & 5 & 15 & 35 &65 & 101 & 135 & 155 &155  & 135 & 101& 65 & 35 & 15 \\ 
\end{tabular}
\]
\begin{align*}
\text{Table 2. Biquadranomial triangle  ($s=3$).\smallskip} 
\end{align*}
\subsection{The $q$-binomial coefficient}  The $q$-analogue of binomial coefficient or  the $q$-binomial coefficient
$[^{n} _{k}]$ generalizes the binomial coefficient $\binom{n}{k}$ \cite{ht,JC}. It is defined as follows 
\begin{align*}
{n \brack k}=\frac{[n]_{q}!}{[k]_{q}![n-k]_{q}!}q^{\binom k2},
\end{align*}
with $ [n]_{q}= 1+q+q^{2}+\cdots+q^{n-1}$ and $[n]_{q}! =[1]_{q}[2]_{q}\cdots[n]_{q}$, we use the convention ${n \brack k}=0$ for $k \notin \lbrace 0,\ldots, n \rbrace$.\\
The $q$-binomial coefficient satisfies the following recurrence relations
\begin{align}\label{03}
{n \brack k}={n-1 \brack k}+q^{n-1}{n-1 \brack k-1},
\end{align}
and
\begin{align}\label{04}
{n \brack k}=q^{k}{n-1 \brack k}+q^{k-1}{n-1 \brack k-1}.
\end{align}

And  the generating functions are
\begin{align}\label{1}
\sum_{k=0}^{n}{n \brack k}x^{k}=(1+x)(1+qx)(1+q^{2}x)\cdots(1+q^{n-1}x),
\end{align}
and 
\begin{align}\label{2t}
\sum_{n\geq0}{n \brack k}x^{n}=\frac{x^{k}q^{\binom{k}{2}}}{(1-x)(1-qx)\cdots(1-q^{k}x)}.
\end{align}
Belbachir and Benmezai \cite{bb} proposed the  $q$-bi$^{s}$nomial coefficient, denoted by ${n \brack k}^{(s)}$, as follows 
\begin{align}\label{10}
{n \brack k}^{(s)}:=(-1)^{k}\sum_{j_{1}+j_{2}+\cdots+j_{s}=k}{n \brack j_{1}}{n \brack j_{2}}\cdots{n \brack j_{s}}a^{-\sum_{r=1}^{s}rj_{r}}.
\end{align}
The $q$-bi$^{s}$nomial coefficient satisfies the  following recurrence relations
\begin{align}
{n \brack k}^{(s)}=\sum_{j=0}^{s}q^{k-j}{n-1 \brack k-j}^{(s)},
\end{align}
\begin{align}
{n \brack k}^{(s)}=\sum_{j=0}^{s}q^{(n-1)j}{n-1 \brack k-j}^{(s)}.
\end{align}
According to (\ref{1}) the generating function is given by
\begin{align}\label{gh}
\sum_{k=0}^{ns}{n \brack k}^{(s)}x^{k}=\prod_{j=0}^{n-1}(1+q^{j}x+(q^{j}x)^{2}+\cdots+(q^{j}x)^{s}).
\end{align}
In the first section we introduce the quasi $s$-Pascal  triangle by using a family of lattice paths; we establish an explicit formula for the elements of the quasi $s$-Pascal triangle, and we prove that the sums of the elements crossing the diagonal rays yield the terms of   $s$-bonacci sequence; we close this section by giving a relation between $s$-Pascal triangle and quasi $s$-Pascal  triangle. The second section is devoted to the linear recurrence relation obtained by summing the elements lying over any finite rays of the quasi $s$-Pascal triangle and we give the corresponding generating function. In the third section we give the de Moivre like summation with some other identities. In section four, we establish the $q$-analogue of the elements of the quasi $s$-Pascal triangle. 

\section{The quasi s-Pascal triangle }
In this section we define the quasi $s$-Pascal triangle, we denote by $\binom{n}{k}_{[s]}$  the coefficient in the $n^{th}$ row and $k^{th}$ column of the quasi $s$-Pascal triangle.
\begin{definition}\label{fd} The quasi-bi$^{s}$nomial coefficient $\binom{n}{k}_{[s]}$ is defined by the number of lattice path from $(0,0)$ to $(n,k)$ with steps in $\lbrace L=(1,0), L_{1}=(1,1), L_{2}=(2,1),\ldots, L_{s}=(s,1)\rbrace$. With $\binom{n}{0}_{[s]}=\binom{n}{n}_{[s]}=1 \ \text{and the convention} \ \binom{n}{k}_{[s]}=0 \ \text{for} \ k > n $  or $k<0.$ 
 \end{definition}
\begin{center}
\definecolor{ududff}{rgb}{0.30196078431372547,0.30196078431372547,1.}
\begin{tikzpicture}[line cap=round,line join=round,>=triangle 45,x=1.0cm,y=1.0cm]
\clip(-3.5,-3) rectangle (17,6);
\draw [->,line width=0.4pt] (0.52,5.) -- (1.52,5.6);
\draw [->,line width=0.4pt] (2.52,5.) -- (3.98,5.58);
\draw [->,line width=0.4pt] (6.,5.) -- (8.52,5.56);
\draw [->,line width=0.4pt] (-1.,5.) -- (-0.34,5.58);
\draw [->,line width=0.4pt] (-2.64,5.) -- (-2.,5.);
\begin{scriptsize}
\draw [fill=ududff] (-2.64,5.) circle (1.5pt);
\draw [fill=ududff] (-1.,5.) circle (1.5pt);
\draw [fill=ududff] (-0.32,4.98) circle (1.5pt);
\draw [fill=ududff] (-0.98,5.58) circle (1.5pt);
\draw [fill=ududff] (0.52,5.) circle (1.5pt);
\draw [fill=ududff] (1.,5.) circle (1.5pt);
\draw [fill=ududff] (1.5,4.98) circle (1.5pt);
\draw [fill=ududff] (1.52,5.6) circle (1.5pt);
\draw [fill=ududff] (1.04,5.58) circle (1.5pt);
\draw [fill=ududff] (0.52,5.56) circle (1.5pt);
\draw [fill=ududff] (2.52,5.) circle (1.5pt);
\draw [fill=ududff] (2.54,5.56) circle (1.5pt);
\draw [fill=ududff] (3.04,5.56) circle (1.5pt);
\draw [fill=ududff] (3.,5.) circle (1.5pt);
\draw [fill=ududff] (3.48,5.) circle (1.5pt);
\draw [fill=ududff] (3.5,5.56) circle (1.5pt);
\draw [fill=ududff] (3.98,5.58) circle (1.5pt);
\draw [fill=ududff] (4.,5.) circle (1.5pt);
\draw [fill=ududff] (6.,5.) circle (1.5pt);
\draw [fill=ududff] (6.52,4.98) circle (1.5pt);
\draw [fill=ududff] (8.,5.) circle (1.5pt);
\draw [fill=ududff] (8.52,5.) circle (1.5pt);
\draw [fill=ududff] (8.52,5.56) circle (1.5pt);
\draw [fill=ududff] (8.04,5.56) circle (1.5pt);
\draw [fill=ududff] (6.5,5.56) circle (1.5pt);
\draw [fill=ududff] (5.98,5.54) circle (1.5pt);
\draw [fill=ududff] (7.34,5.54) circle (0.5pt);
\draw [fill=ududff] (7.,5.54) circle (0.5pt);
\draw [fill=ududff] (7.64,5.54) circle (0.5pt);
\draw [fill=ududff] (7.62,5.) circle (0.5pt);
\draw [fill=ududff] (7.36,5.) circle (0.5pt);
\draw [fill=ududff] (7.,5.) circle (0.5pt);
\draw [fill=ududff] (4.6,4.98) circle (0.5pt);
\draw [fill=ududff] (5.,5.) circle (0.5pt);
\draw [fill=ududff] (5.4,5.) circle (0.5pt);
\draw [fill=ududff] (-0.34,5.58) circle (1.5pt);
\draw [fill=ududff] (-2.,5.) circle (1.5pt);
\large\draw (-2.7,4.5) node[anchor=north west] { $\text{L}$};
\large\draw (-1,4.5) node[anchor=north west] { $\text{$L_{1}$}$};
\large\draw (.8,4.5) node[anchor=north west] { $\text{$L_{2}$}$};
\large\draw (3,4.5) node[anchor=north west] { $\text{$L_{3}$}$};
\large\draw (7,4.5) node[anchor=north west] { $\text{$L_{s}$}$};
\end{scriptsize}
\end{tikzpicture}
\vspace*{-7cm}
\end{center}
\begin{center}
Ilustration of possible steps for the coefficient $\binom{n}{k}_{[s]}.$
\end{center}
\
\begin{lemma}
 The quasi-bi$^{s}$nomial coefficient $\binom{n}{k}_{[s]}$ satisfies the following recurrence relation
\begin{align}\label{aabc}
\binom{n}{k}_{[s]}=\binom{n-1}{k}_{[s]}+\binom{n-1}{k-1}_{[s]}+\binom{n-2}{k-1}_{[s]}+\cdots+\binom{n-s}{k-1}_{[s]},
\end{align}
\end{lemma}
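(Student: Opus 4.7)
The plan is to prove the recurrence by a straightforward last-step decomposition of the lattice paths that define $\binom{n}{k}_{[s]}$. Let $\Pa(n,k)$ denote the set of lattice paths from $(0,0)$ to $(n,k)$ using steps in $\{L, L_1, L_2, \ldots, L_s\}$, so that $|\Pa(n,k)| = \binom{n}{k}_{[s]}$ by Definition \ref{fd}.

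First I would partition $\Pa(n,k)$ according to the last step taken. Since every step in the allowed step set has a positive $x$-increment, each path of positive length has a well-defined final step, and exactly one of the following holds: the last step is $L=(1,0)$, or the last step is $L_i=(i,1)$ for some $i \in \{1,2,\ldots,s\}$. Removing that last step yields a bijection between the paths in $\Pa(n,k)$ ending with step $L$ and the paths in $\Pa(n-1,k)$, and a bijection between the paths in $\Pa(n,k)$ ending with step $L_i$ and the paths in $\Pa(n-i,k-1)$. Summing the cardinalities of these $s+1$ classes gives
\begin{align*}
\binom{n}{k}_{[s]} = \binom{n-1}{k}_{[s]} + \sum_{i=1}^{s}\binom{n-i}{k-1}_{[s]},
\end{align*}
which is the claimed identity.

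Next I would verify that the boundary conventions $\binom{n}{0}_{[s]} = \binom{n}{n}_{[s]} = 1$ and $\binom{n}{k}_{[s]} = 0$ for $k > n$ or $k < 0$ are compatible with the decomposition, so that no spurious terms arise when $n-i < 0$ or $k-1 < 0$. For $k=0$ the path is forced to use only $L$ steps, and the recurrence reduces to $1 = 1$; for $k = n$ the path must use only $L_1$ steps, and again the identity holds tautologically. The remaining cases $1 \le k \le n-1$ are covered directly by the bijective argument above.

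I do not anticipate a real obstacle here: the argument is purely combinatorial and the only thing to be careful about is ensuring that when some $n-i$ becomes smaller than $k-1$ (so that $\Pa(n-i,k-1)$ is empty) or when indices fall outside the admissible range, the convention $\binom{n}{k}_{[s]} = 0$ correctly accounts for this. Once this bookkeeping is clean, the last-step partition immediately yields \eqref{aabc}.
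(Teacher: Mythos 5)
Your proof is correct and takes essentially the same route as the paper: a partition of the lattice paths in $\Pa(n,k)$ according to the last step, with removal of that step giving the $s+1$ classes counted by the terms on the right-hand side. Your extra check of the boundary conventions is a harmless (and slightly more careful) addition, but the core argument is identical.
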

\begin{proof}  By Definition \ref{fd}, the last step of any path is one of $\lbrace L=(1,0), L_{1}=(1,1), L_{2}=(2,1),\ldots, L_{s}=(s,1)\rbrace$, then if the last step is $L=(1,0)$, it remains to enumerate the number of lattice paths from $(0,0)$ to $(n-1,k)$ which is $\binom{n-1}{k}_{[s]}$, or if the last one is $L_{1}=(1,1)$ it remains to enumerate the number of lattice paths from $(0,0)$ to $(n-1,k-1)$ which is $\binom{n-1}{k-1}_{[s]},\ldots,$ if the last step is $L_{s}=(s,1)$ it remains to enumerate the number of lattice paths from $(0,0)$ to $(n-s,k-1)$ which is $\binom{n-s}{k-1}_{[s]}$, considering all possibilities we construct our recurrence.
\end{proof}
\subsection{Generating function}
Here is given the generating function of $\lbrace \binom{n}{k}_{[s]} \rbrace_{n}$;
\begin{theorem}\label{aqw} Let $F_{k}(x):=\sum_{n\geq 0}\binom{n}{k}_{[s]} x^{n}$, then
 $$F_{k}(x)=(1+x+x^{2}+\cdots+x^{s-1})^{k}\frac{x^{k}}{(1-x)^{k+1}}.$$
\end{theorem}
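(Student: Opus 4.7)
The plan is to convert the recurrence (\ref{aabc}) into a functional equation relating $F_k(x)$ to $F_{k-1}(x)$, and then iterate down to $F_0(x)$.

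First I would multiply the recurrence
$$\binom{n}{k}_{[s]}=\binom{n-1}{k}_{[s]}+\sum_{i=1}^{s}\binom{n-i}{k-1}_{[s]}$$
by $x^{n}$ and sum over $n\geq 0$. Using the convention $\binom{m}{j}_{[s]}=0$ for $m<0$ or $j<0$, each shifted sum becomes $\sum_{n\geq 0}\binom{n-i}{k-1}_{[s]}x^{n}=x^{i}F_{k-1}(x)$, and similarly $\sum_{n\geq 0}\binom{n-1}{k}_{[s]}x^{n}=xF_{k}(x)$. (For $k=0$ the right-hand terms involving $k-1$ vanish, consistent with the initial condition.) This yields
$$F_{k}(x)=xF_{k}(x)+(x+x^{2}+\cdots+x^{s})F_{k-1}(x),$$
which after factoring gives the one-step recurrence
$$F_{k}(x)=\frac{x(1+x+x^{2}+\cdots+x^{s-1})}{1-x}\,F_{k-1}(x).$$

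Next I would compute the base case. Since $\binom{n}{0}_{[s]}=1$ for every $n\geq 0$ (the unique path using only horizontal steps $L=(1,0)$), one gets $F_{0}(x)=\sum_{n\geq 0}x^{n}=\frac{1}{1-x}$, which matches the claimed formula at $k=0$.

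Finally, iterating the recurrence $k$ times starting from $F_{0}(x)=1/(1-x)$ gives
$$F_{k}(x)=\left(\frac{x(1+x+\cdots+x^{s-1})}{1-x}\right)^{k}\cdot\frac{1}{1-x}=(1+x+\cdots+x^{s-1})^{k}\frac{x^{k}}{(1-x)^{k+1}},$$
which is the stated identity. There is no real obstacle here; the only delicate point is bookkeeping the index shifts so that every series genuinely runs from $n=0$, which is handled by the zero convention on the coefficients.
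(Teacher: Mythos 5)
Your proof is correct and follows exactly the same route as the paper: both convert the recurrence (\ref{aabc}) into the functional equation $F_{k}(x)=xF_{k}(x)+(x+x^{2}+\cdots+x^{s})F_{k-1}(x)$ and iterate down to $k=0$. You simply supply more detail than the paper does on the base case $F_{0}(x)=1/(1-x)$ and the index-shift bookkeeping.
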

\begin{proof}
It follows from Relation  (\ref{aabc}) that
\begin{align*}
F_{k}(x)=xF_{k}(x)+xF_{k-1}(x)+x^{2}F_{k-1}(x)+\cdots+x^{s}F_{k-1}(x),
\end{align*}
repeated applications of this recurrence give the result. 
\end{proof}
\subsection{Binomial coefficients explicit formula}
The following result gives an explicit formula for the coefficients of the quasi $s$-Pascal triangle in terms of binomial coefficients and a variant with multinomial coefficients.
\begin{theorem}\label{po} The explicit formula for the quasi-bi$^{s}$nomial coefficient is given by
\begin{align}\label{T1}
\binom{n}{k}_{[s]}=\sum_{j_{1}}\sum_{j_{2}}\cdots\sum_{j_{s-1}}\binom {k}{j_{1}}\binom {j_{1}}{j_{2}}\cdots \binom{j_{s-2}}{j_{s-1}}\binom{n-\sum_{i=1}^{s-1}j_{i}}{k},
\end{align}
the multinomial version is
\begin{align}\label{T2}
\binom{n}{k}_{[s]}&=\sum_{k_{1},k_{2},\ldots,k_{s}}\binom {k}{k_{1},k_{2},\ldots,k_{s}}\binom {n+k-\sum_{i=1}^{s}ik_{i}}{k},
\end{align}
where $ \binom {k}{k_{1},k_{2},\ldots,k_{s}}=\frac{k!}{k_{1}!k_{2}!\cdots k_{s}!}$ for $k_{1}+k_{2}+,\cdots+k_{s}=k$ and $ \binom {k}{k_{1},k_{2},\ldots,k_{s}}=0$, else.
\end{theorem}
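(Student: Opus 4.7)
The plan is to derive both identities directly from the generating function of Theorem \ref{aqw},
$$F_{k}(x)=(1+x+x^{2}+\cdots+x^{s-1})^{k}\frac{x^{k}}{(1-x)^{k+1}},$$
by extracting the coefficient of $x^{n}$ in two different ways. The second factor is classical: since $\frac{1}{(1-x)^{k+1}}=\sum_{m\geq 0}\binom{m+k}{k}x^{m}$, the coefficient of $x^{N}$ in $\frac{x^{k}}{(1-x)^{k+1}}$ is simply $\binom{N}{k}$. So everything reduces to expanding $(1+x+\cdots+x^{s-1})^{k}$ in two ways and then convolving.

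For the multinomial form (\ref{T2}), I would apply the multinomial theorem directly:
$$(1+x+x^{2}+\cdots+x^{s-1})^{k}=\sum_{k_{1}+\cdots+k_{s}=k}\binom{k}{k_{1},k_{2},\ldots,k_{s}}x^{\,0\cdot k_{1}+1\cdot k_{2}+\cdots+(s-1)k_{s}},$$
where $k_{i}$ records how many factors contribute $x^{i-1}$. The exponent rewrites as $\sum_{i=1}^{s}ik_{i}-k$, and convolving with the coefficients of $x^{k}/(1-x)^{k+1}$ then yields
$$\binom{n}{k}_{[s]}=\sum_{k_{1},\ldots,k_{s}}\binom{k}{k_{1},\ldots,k_{s}}\binom{n+k-\sum_{i=1}^{s}ik_{i}}{k},$$
which is (\ref{T2}).

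For the binomial-product form (\ref{T1}) I would instead expand $(1+x+\cdots+x^{s-1})^{k}$ iteratively, writing at each stage $1+x+\cdots+x^{r}=1+(x+x^{2}+\cdots+x^{r})$ and applying the ordinary binomial theorem. The first step gives
$$(1+x+\cdots+x^{s-1})^{k}=\sum_{j_{1}}\binom{k}{j_{1}}x^{j_{1}}(1+x+\cdots+x^{s-2})^{j_{1}},$$
then the analogous expansion of $(1+x+\cdots+x^{s-2})^{j_{1}}$ introduces $\binom{j_{1}}{j_{2}}x^{j_{2}}$, and so on; after $s-1$ iterations the inner power becomes $1^{j_{s-1}}=1$. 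Collecting the powers of $x$ that were peeled off gives
$$(1+x+\cdots+x^{s-1})^{k}=\sum_{j_{1},\ldots,j_{s-1}}\binom{k}{j_{1}}\binom{j_{1}}{j_{2}}\cdots\binom{j_{s-2}}{j_{s-1}}\,x^{j_{1}+j_{2}+\cdots+j_{s-1}}.$$
Multiplying by $\frac{x^{k}}{(1-x)^{k+1}}$ and reading off the coefficient of $x^{n}$ produces the factor $\binom{n-\sum j_{i}}{k}$ and hence (\ref{T1}).

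The main obstacle is purely bookkeeping: one must check that the iterated binomial expansion terminates correctly (exactly $s-1$ levels), that the summation ranges $0\le j_{s-1}\le j_{s-2}\le\cdots\le j_{1}\le k$ are implicitly enforced by the conventions $\binom{a}{b}=0$ for $b>a$, and that the two parametrisations $(j_{1},\ldots,j_{s-1})$ and $(k_{1},\ldots,k_{s})$ match the same monomial contributions via $k_{i}=j_{s-i}-j_{s-i+1}$ (with $j_{0}=k$ and $j_{s}=0$). Beyond this indexing step both identities follow immediately from the generating function.
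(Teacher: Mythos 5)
Your proposal is correct and follows essentially the same route as the paper: both arguments rest on the generating function $F_{k}(x)=(1+x+\cdots+x^{s-1})^{k}x^{k}/(1-x)^{k+1}$ of Theorem \ref{aqw}, using the iterated binomial expansion of $(1+x+\cdots+x^{s-1})^{k}$ for (\ref{T1}) and the multinomial theorem for (\ref{T2}). The only difference is direction — the paper computes the generating function of each claimed right-hand side and matches it to $F_{k}(x)$, whereas you expand $F_{k}(x)$ and read off the coefficient of $x^{n}$ — which is the same computation run in reverse.
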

\begin{proof} 
For  Relation $(\ref{T1})$ we need to prove that 
\begin{align*}
&\sum_{n=0}^{\infty}\sum_{j_{1}}\sum_{j_{2}}\cdots\sum_{j_{s-1}}\binom {k}{j_{1}}\binom {j_{1}}{j_{2}}\cdots \binom{j_{s-2}}{j_{s-1}}\binom{n-\sum_{i=1}^{s-1}j_{i}}{k}x^{n}=\left(\frac{x+x^{2}+\cdots+x^{s}}{1-x}\right)^{k}\frac{1}{1-x}.\\
&\text{So}\\ 
&\sum_{n=0}^{\infty}\sum_{j_{1}}\sum_{j_{2}}\cdots\sum_{j_{s-1}}\binom {k}{j_{1}}\binom {j_{1}}{j_{2}}\cdots \binom{j_{s-2}}{j_{s-1}}\binom{n-\sum_{i=1}^{s-1}j_{i}}{k}x^{n}\\
&=\sum_{j_{1}}\binom {k}{j_{1}}x^{j_{1}}\sum_{j_{2}}\binom {j_{1}}{j_{2}}x^{j_{2}}\cdots\sum_{j_{s-1}}\binom{j_{s-2}}{j_{s-1}}x^{j_{s-1}}\sum_{n=0}^{\infty}\binom{n-\sum_{i=1}^{s-1}j_{i}}{k}x^{n-\sum_{i=1}^{s-1}j_{i}}\\
&=\frac{x^{k}}{(1-x)^{k+1}}\sum_{j_{1}}\binom {k}{j_{1}}x^{j_{1}}\sum_{j_{2}}\binom {j_{1}}{j_{2}}x^{j_{2}}\cdots\sum_{j_{s-2}}\binom{j_{s-3}}{j_{s-2}}x^{j_{s-2}}\sum_{j_{s-1}}\binom{j_{s-2}}{j_{s-1}}x^{j_{s-1}}\\
 &=\frac{x^{k}}{(1-x)^{k+1}}\sum_{j_{1}}\binom {k}{j_{1}}x^{j_{1}}\sum_{j_{2}}\binom {j_{1}}{j_{2}}x^{j_{2}}\cdots\sum_{j_{s-2}}\binom{j_{s-3}}{j_{s-2}}(x+x^{2})^{j_{s-2}}\\
\vdots \hspace*{-0.3cm}\\
&=\frac{x^{k}}{(1-x)^{k+1}}\sum_{j_{1}}\binom{k}{j_{1}}(x+x^{2}+\cdots+x^{s-1})^{j_{1}}=\left(\frac{x+x^{2}+\cdots+x^{s}}{1-x}\right)^{k}\frac{1}{1-x}.
\end{align*}
For  Relation $(\ref{T2})$ we have
\begin{align*}
&\sum_{n\geq0}\sum_{k_{1},k_{2},\ldots,k_{s}}\binom {k}{k_{1},k_{2},\ldots,k_{s}}\binom {n-\sum_{i=1}^{s}ik_{i}+k}{k}x^{n}\\
&=\frac{1}{x^{k}}\sum_{k_{1},k_{2},\ldots,k_{s}}\binom {k}{k_{1},k_{2},\ldots,k_{s}}x^{\sum_{i=1}^{s}ik_{i}}\sum_{n\geq0}\binom {n-\sum_{i=1}^{s}ik_{i}+k}{k}x^{n-\sum_{i=1}^{s}ik_{i}+k}\\
&=\frac{1}{(1-x)^{k+1}}\sum_{k_{1},k_{2},\ldots,k_{s}}\binom {k}{k_{1},k_{2},\ldots,k_{s}}x^{\sum_{i=1}^{s}ik_{i}}\\
&=\left(\frac{x+x^{2}+\cdots+x^{s}}{1-x}\right)^{k}\frac{1}{1-x}.
\end{align*}
\end{proof}
\subsection{Link with generalized Dellanoy matrix}
Ramirez and Sirvent \cite{RS} propose a generalization of Dellanoy and Pascal Riordan arrays, they denoted by $\D_{m}(n,k)$ the element in the $n^{th}$ row and $k^{th}$ column of the generalized Dellanoy matrix, such that $\D_{m}(n,k)$ satisfy the following recurrence relation 
\begin{align*}
\D_{m}(n+1,k)=a\D_{m}(n+1,k-1)+\sum_{i=0}^{m-1}a_{i+1}\D_{m}(n,k-i),
\end{align*}
with $k\geq m-1, n\geq 1$ and initial conditions $\D_{m}(0,k)=a^{k}$ and $\D_{m}(n,0)=a_{1}^{n}$.\\
The coefficient $\D_{m}(n,k)$ is given by the following explicit formula
\begin{align*}
\D_{m}(n,k)=\sum_{j_{1},j_{2},\ldots,j_{m-1}}\binom{n}{j_{1}}\binom{n-j_{1}}{j_{2}}\cdots\binom{n-j_{1}-\cdots-j_{m-2}}{j_{m-1}}\binom{n+k-u}{n}\times\\
\times a_{1}^{j_{1}}a_{2}^{j_{2}}\cdots a_{m-1}^{j_{m-1}}a_{m}^{n-\sum_{i=1}^{m-1}j_{i}}a^{k-u},
\end{align*}
where $u=(m-1)(n-j_{1})+\sum_{i=2}^{m-1}(i-m)j_{i}.$\\

The following result gives a relation between $\D_{m}(n,k)$ and $\binom{n}{k}_{[s]}$.
\begin{theorem}For $m=s$, $a=1$ and $a_{i}=1$, $i \in \lbrace 1,\ldots,s \rbrace$
\begin{align*}
\D_{s}(k,n-k)=\binom{n}{k}_{[s]}.
\end{align*}
\end{theorem}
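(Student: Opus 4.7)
The plan is to derive the equality purely algebraically, by substituting into the explicit formula for $\mathcal{D}_m(n,k)$ recalled just above the theorem, specializing to $m=s$ and $a=a_1=\cdots=a_s=1$, and then identifying the resulting sum with the multinomial expression \eqref{T2} from Theorem~\ref{po}. No new combinatorial construction is required; it is all bookkeeping driven by the substitution $(n,k)\mapsto(k,n-k)$.

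First, setting every weight equal to $1$ erases the factors $a_1^{j_1}\cdots a_s^{n-\sum j_i}a^{k-u}$ and leaves the product $\binom{n}{j_1}\binom{n-j_1}{j_2}\cdots\binom{n-\sum_{i=1}^{s-2}j_i}{j_{s-1}}$ times $\binom{n+k-u}{n}$, where $u=(s-1)n-\sum_{i=1}^{s-1}(s-i)j_i$. I would then apply the index shift $(n,k)\mapsto(k,n-k)$ dictated by the statement and introduce the slack variable $j_s:=k-j_1-\cdots-j_{s-1}$. The classical telescoping identity
$$\binom{k}{j_1}\binom{k-j_1}{j_2}\cdots\binom{k-\sum_{i=1}^{s-2}j_i}{j_{s-1}}=\binom{k}{j_1,j_2,\ldots,j_{s-1},j_s}$$
turns the product into a single multinomial coefficient, and the summation range becomes exactly the set of compositions $j_1+\cdots+j_s=k$ appearing in \eqref{T2}.

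It then remains to check that the exponent of the remaining binomial matches too. Under the substitution the formula for $u$ becomes $u=(s-1)k-\sum_{i=1}^{s-1}(s-i)j_i$, so the factor $\binom{n+k-u}{n}$ becomes $\binom{n-u}{k}$. Using the slack constraint $k=j_1+\cdots+j_s$ one verifies the linear identity $(s-1)k-\sum_{i=1}^{s-1}(s-i)j_i=\sum_{i=1}^{s}ij_i-k$, whence $n-u=n+k-\sum_{i=1}^{s}ij_i$, precisely the exponent appearing in \eqref{T2}. The two sums then coincide termwise, yielding $\mathcal{D}_s(k,n-k)=\binom{n}{k}_{[s]}$.

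The only delicate point is the last linear identity: after the index change the apparent factor $(s-1)k$ must collapse linearly in the $j_i$, and this collapse crucially depends on incorporating the new slack index $j_s$. A parallel proof, matching the recurrence \eqref{aabc} satisfied by $\binom{n}{k}_{[s]}$ against the defining recurrence of $\mathcal{D}_m$, is conceivable but would require separately treating the boundary cases $k<s-1$, where the Ram\'irez--Sirvent recurrence is not asserted, so the explicit-formula route proposed here is the cleaner one.
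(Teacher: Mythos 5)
Your proof is correct, and it reaches the same destination as the paper --- specializing the Ram\'irez--Sirvent explicit formula at $a=a_i=1$, performing the swap $(n,k)\mapsto(k,n-k)$, and matching the result against Theorem~\ref{po} --- but via the other explicit formula of that theorem. The paper targets the nested-binomial form (\ref{T1}): it rewrites each factor by the symmetry $\binom{n}{j_1}=\binom{n}{n-j_1}$, $\binom{n-j_1}{j_2}=\binom{n-j_1}{n-j_1-j_2}$, etc., substitutes $j_v'=n-\sum_{l=1}^{v}j_l$, and observes that $j_1'+\cdots+j_{s-1}'=u$, so that $\binom{n+k-u}{n}$ becomes $\binom{n+k-\sum_i j_i'}{n}$ and the sum is exactly (\ref{T1}) after the swap. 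You instead target the multinomial form (\ref{T2}): the telescoping product collapses to $\binom{k}{j_1,\ldots,j_{s-1},j_s}$ with the slack variable $j_s=k-\sum_{i<s}j_i$, and the linear identity $(s-1)k-\sum_{i=1}^{s-1}(s-i)j_i=\sum_{i=1}^{s}ij_i-k$ (which I checked; it does hold precisely because of the slack constraint) converts $\binom{n-u}{k}$ into $\binom{n+k-\sum_{i}ij_i}{k}$. The two routes are of comparable length; the paper's buys a slightly slicker handling of $u$ (it is literally the sum of the new indices, no linear identity to verify), while yours avoids the complementary-index gymnastics and makes the summation range $j_1+\cdots+j_s=k$ transparent. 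Your closing remark about the recurrence-based alternative and its boundary issues is also sound.
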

\begin{proof}  For $a=1$ and $a_{i}=1$, $i \in \lbrace 1,\ldots,s \rbrace$ we have
\begin{align*}
\D_{s}(n,k)=\sum_{j_{1},j_{2},\ldots,j_{s-1}}\binom{n}{j_{1}}\binom{n-j_{1}}{j_{2}}\cdots\binom{n-j_{1}-\cdots-j_{s-2}}{j_{s-1}}\binom{n+k-u}{n},\hspace*{5cm}
\end{align*}
where $u=(s-1)(n-j_{1})+\sum_{i=2}^{s-1}(i-s)j_{i},$ then
\begin{align*}
\D_{s}(n,k)=\sum_{j_{1},j_{2},\ldots,j_{s-1}}\binom{n}{n-j_{1}}\binom{n-j_{1}}{n-j_{1}-j_{2}}\cdots\binom{n-j_{1}-\cdots-j_{s-2}}{n-j_{1}-\cdots-j_{s-2}-j_{s-1}}\binom{n+k-u}{n},\hspace*{1cm}
\end{align*}
we put $ j_{v}^{'}\rightarrow n-\sum_{l=1}^{v}j_{l}, \ v=\lbrace 1,\ldots,s-1 \rbrace  $  then $ j_{1}^{'}+j_{2}^{'}+\cdots+j_{s-1}^{'}=u$,
\begin{align*}
\D_{s}(n,k)=\sum_{j_{1}^{'},j_{2}^{'},\ldots,j_{s-1}^{'}}\binom {n}{j_{1}^{'}}\binom {j_{1}^{'}}{j_{2}^{'}}\cdots \binom{j_{s-2}^{'}}{j_{s-1}^{'}}\binom{n+k-\sum_{i=1}^{s-1}j_{i}^{'}}{n}.\hspace*{6.5cm}\\
\text{Thus} \ \D_{s}(k,n-k)=\binom{n}{k}_{[s]}.\hspace*{13cm}
\end{align*}
\end{proof}
\subsection{Recurrence relation or s-bonacci sequence}
Now we establish the recurrence relation for the $s$-bonacci sequence, which is a generalization of Fibonacci sequence. Let $(T_{n,s})_{n}$ be the terms of the  $s$-bonacci sequence obtained by summing the elements lying over the  principal diagonal rays in the quasi $s$-Pascal triangle.\\
Let be the sequence
\begin{align*}
T_{n+1,s}:=\sum_{k}\binom{n-k}{k}_{[s]},  
\end{align*}
with $T_{0,s}=0$.
\begin{theorem}\label{aqwd} For  $ n \geq 0$, $(T_{n,s})_{n} $  satisfies the following recurrence relation
\[
T_{n+1,s}=T_{n,s}+T_{n-1,s}+\cdots+T_{n-s,s},
\]
with $T_{1,s}=1,T_{-i,s}=0 \ \text{for} \ i \in \lbrace 0,-1,\ldots,-(s-1) \rbrace$.
\end{theorem}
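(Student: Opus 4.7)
The plan is to prove the recurrence by computing the ordinary generating function of $(T_{n+1,s})_{n\geq 0}$ in closed form and recognizing it as a rational function whose denominator encodes the desired $(s{+}1)$-step recurrence. This avoids any combinatorial bijection and leverages Theorem \ref{aqw} directly.

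First I would set $G(x) := \sum_{n\geq 0} T_{n+1,s}\,x^{n}$ and substitute the definition $T_{n+1,s}=\sum_{k}\binom{n-k}{k}_{[s]}$. After exchanging the order of summation and shifting the inner index by $m = n-k$, the inner sum becomes $F_{k}(x)$, yielding
\begin{align*}
G(x) \;=\; \sum_{k\geq 0} x^{k}\,F_{k}(x).
\end{align*}
Theorem \ref{aqw} gives $F_{k}(x) = (1+x+\cdots+x^{s-1})^{k}\,x^{k}/(1-x)^{k+1}$, so
\begin{align*}
G(x) \;=\; \frac{1}{1-x}\sum_{k\geq 0}\left(\frac{x^{2}(1+x+\cdots+x^{s-1})}{1-x}\right)^{k}.
\end{align*}

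Next I would sum the geometric series and simplify: the denominator becomes $1 - x - x^{2}(1 + x + \cdots + x^{s-1})$, which collapses cleanly to $1 - x - x^{2} - x^{3} - \cdots - x^{s+1}$. Hence
\begin{align*}
G(x) \;=\; \frac{1}{1 - x - x^{2} - x^{3} - \cdots - x^{s+1}}.
\end{align*}
This rational form is exactly the generating function of a sequence satisfying the linear recurrence $a_{n+1}=a_{n}+a_{n-1}+\cdots+a_{n-s}$ with prescribed initial conditions read off from the Taylor coefficients at $x=0$.

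Finally, I would verify the initial conditions. From $G(0)=1$ we get $T_{1,s}=1$, and the convention $T_{-i,s}=0$ for $i\in\{0,1,\ldots,s-1\}$ matches the standard padding needed for the recurrence to start producing the correct early values $T_{2,s},T_{3,s},\ldots$ from the expansion of $1/(1-x-x^{2}-\cdots-x^{s+1})$. The only delicate point I anticipate is the algebraic simplification $1-x-x^{2}(1+x+\cdots+x^{s-1}) = 1-x-x^{2}-\cdots-x^{s+1}$, but this is a one-line identity. A brief sanity check against the specialization $s=1$ (which must recover the Fibonacci recurrence and identity (1) in the introduction) would complete the argument.
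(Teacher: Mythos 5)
Your argument is correct, but it takes a genuinely different route from the paper. The paper's proof never touches generating functions: it substitutes the lattice-path recurrence (\ref{aabc}) termwise into the diagonal sum $T_{n+1,s}=\sum_{k}\binom{n-k}{k}_{[s]}$ and re-indexes each of the $s$ shifted sums by $k'\to k-1$, so that each block is recognized directly as $T_{n-j,s}$. Your proof instead packages the whole sequence into $G(x)=\sum_{k\geq 0}x^{k}F_{k}(x)$, invokes Theorem \ref{aqw}, and sums the geometric series to get $G(x)=\bigl(1-x-x^{2}-\cdots-x^{s+1}\bigr)^{-1}$; the recurrence and the initial condition $T_{1,s}=1$ then fall out of comparing coefficients. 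Both are sound (the interchange of summation and the geometric series are legitimate as formal power series, since the ratio has valuation $2$ in $x$ and each diagonal sum is finite). The paper's route is more elementary, needing only the step-set recurrence; yours buys an extra dividend, namely the closed-form generating function of the $s$-bonacci sequence, which the paper only obtains later (and in greater generality) in the theorem on $\T_{n,s}^{(\alpha,\beta,r)}$ --- indeed your $G(x)$ is the specialization $\alpha=1$, $\beta=0$, $r=1$ of that result. One small caution: the recurrence as displayed cannot hold at $n=0$ (it would force $T_{1,s}=0$), so your coefficient extraction correctly yields it only for $n\geq 1$ together with $T_{1,s}=1$; this matches what the paper actually proves despite its ``$n\geq 0$'' phrasing.
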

It is not else $s$-bonacci sequence.
\begin{proof}  We have $T_{n+1,s}=\sum_{k}\binom{n-k}{k}_{[s]}$ and by Relation (\ref{aabc}) we obtain\\
$
T_{n+1,s}=\sum_{k}\binom{n-k-1}{k}_{[s]}+\sum_{k}\binom{n-k-1}{k-1}_{[s]}+\cdots+\sum_{k}\binom{n-k-s}{k-1}_{[s]}\\
\hspace*{4.2cm} k^{'}\rightarrow k-1 \\
\hspace*{0.8cm}=\sum_{k}\binom{n-k-1}{k}_{[s]}+\sum_{k^{'}}\binom{n-k^{'}-2}{k^{'}}_{[s]}+\cdots+\sum_{k^{'}}\binom{n-k^{'}-s-1}{k^{'}}_{[s]}\\
\\
\hspace*{0.8cm}=T_{n,s}+T_{n-1,s}+\cdots+T_{n-s,s}.
$
\end{proof}
For $s=1$ and $s=2$ we obtain the terms of Fibonacci and Tribonacci sequences respectively.
\begin{example} For $s=3$ we have the quadrabonacci triangle 
\[
\begin{tabular}{|c|cccccccccc}
\hline
$_{\mathbf{n}\backslash \mathbf{k}}$ & \textbf{0} & \textbf{1} & \textbf{2}
& \textbf{3} & \textbf{4} & \textbf{5} & \textbf{6} & \textbf{7} & \textbf{8}
& \textbf{9} \\ \hline
\textbf{0} & 1\  &  &  &  &  &  &  &  &  &  \\ 
\textbf{1} & 1 & 1 &  &  &  &  &  &  &  &  \\ 
\textbf{2} & 1 & 3 & 1 &  &  &  &  &  &  &  \\ 
\textbf{3} & 1 & 6 & 5 & 1 &  &  &  &  &  &  \\ 
\textbf{4} & 1 & 9 & 15 & 7 & 1 &  &  &  &  &  \\ 
\textbf{5} & 1 & 12& 33 & 28 & 9 & 1 &  &  &  &  \\ 
\textbf{6} & 1 & 15 & 60 & 81 & 45 & 11 & 1 &  &  &  \\ 
\textbf{7} & 1 & 18 & 96 & 189 &66 & 33 & 13 & 1 &  &  \\ 
\textbf{8} & 1 & 21 & 141 & 378 & 459 & 281 & 91 & 15 & 1 &  \\ 
\textbf{9} & 1 & 24 & 195 & 675 & 1107 & 946 & 449 & 120 & 17 & 1%
\end{tabular}
\]
\vspace*{-0.52cm}
\begin{center}
\text{Table 2. The  quadrabonacci triangle.\smallskip} 
\end{center}
By Relation (\ref{aabc}) the elements of the  quadrabonacci triangle ($s=3$) are given by $\binom{n}{0}_{[3]}=\binom{n}{n}_{[3]}=1$, and
$$\binom{n}{k}_{[3]}=\binom{n-1}{k}_{[3]}+\binom{n-1}{k-1}_{[3]}+\binom{n-2}{k-1}_{[3]}+\binom{n-3}{k-1}_{[3]}.$$
For $s=3$,  $\binom{n}{k}_{[3]}$ counts  the number of lattice paths with steps in $(1,0)$, $(1,1)$, $(2,1)$, $(3,1)$ from $(0,0)$ to $(n,k)$, for example for the value  $6$ in the quadrabonacci triangle we have the  lattice path
\vspace*{-0.64cm}
\begin{center}
\definecolor{ududff}{rgb}{0.30196078431372547,0.30196078431372547,1.}
\begin{tikzpicture}[line cap=round,line join=round,>=triangle 45,x=1.0cm,y=1.0cm]
\clip(-7.8,-2.7) rectangle (16.24,7.8);
\draw [->,line width=0.2pt] (-6.,6.) -- (-5.,7.);
\draw [->,line width=0.2pt] (-5.,7.) -- (-4.,7.);
\draw [->,line width=0.2pt] (-4.,7.) -- (-3.,7.);
\draw [->,line width=0.2pt] (-2.,6.) -- (-1.,6.);
\draw [->,line width=0.2pt] (-1.,6.) -- (0.,7.);
\draw [->,line width=0.2pt] (0.,7.) -- (1.,7.);
\draw [->,line width=0.2pt] (2.,6.) -- (3.,6.);
\draw [->,line width=0.2pt] (3.,6.) -- (4.,6.);
\draw [->,line width=0.2pt] (4.,6.) -- (5.,7.);
\draw [->,line width=0.2pt] (-6.,4.) -- (-5.,4.);
\draw [->,line width=0.2pt] (-5.,4.) -- (-3.,5.);
\draw [->,line width=0.2pt] (-2.,4.) -- (0.,5.);
\draw [->,line width=0.2pt] (0.,5.) -- (1.,5.);
\draw [->,line width=0.2pt] (2.,4.) -- (5.,5.);
\begin{scriptsize}
\draw [fill=ududff] (-6.,7.) circle (1.5pt);
\draw [fill=ududff] (-5.,7.) circle (1.5pt);
\draw [fill=ududff] (-4.,7.) circle (1.5pt);
\draw [fill=ududff] (-3.,7.) circle (1.5pt);
\draw [fill=ududff] (-6.,6.) circle (1.5pt);
\draw [fill=ududff] (-5.,6.) circle (1.5pt);
\draw [fill=ududff] (-4.,6.) circle (1.5pt);
\draw [fill=ududff] (-3.,6.) circle (1.5pt);
\draw [fill=ududff] (-2.,7.) circle (1.5pt);
\draw [fill=ududff] (-2.,6.) circle (1.5pt);
\draw [fill=ududff] (-1.,7.) circle (1.5pt);
\draw [fill=ududff] (-1.,6.) circle (1.5pt);
\draw [fill=ududff] (0.,7.) circle (1.5pt);
\draw [fill=ududff] (0.,6.) circle (1.5pt);
\draw [fill=ududff] (1.,7.) circle (1.5pt);
\draw [fill=ududff] (1.,6.) circle (1.5pt);
\draw [fill=ududff] (2.,7.) circle (1.5pt);
\draw [fill=ududff] (2.,6.) circle (1.5pt);
\draw [fill=ududff] (3.,7.) circle (1.5pt);
\draw [fill=ududff] (3.,6.) circle (1.5pt);
\draw [fill=ududff] (4.,7.) circle (1.5pt);
\draw [fill=ududff] (5.,7.) circle (1.5pt);
\draw [fill=ududff] (5.,6.) circle (1.5pt);
\draw [fill=ududff] (4.,6.) circle (1.5pt);
\draw [fill=ududff] (-6.,5.) circle (1.5pt);
\draw [fill=ududff] (-6.,4.) circle (1.5pt);
\draw [fill=ududff] (-5.,4.) circle (1.5pt);
\draw [fill=ududff] (-5.,5.) circle (1.5pt);
\draw [fill=ududff] (-4.,4.) circle (1.5pt);
\draw [fill=ududff] (-4.,5.) circle (1.5pt);
\draw [fill=ududff] (-3.,4.) circle (1.5pt);
\draw [fill=ududff] (-3.,5.) circle (1.5pt);
\draw [fill=ududff] (-2.,4.) circle (1.5pt);
\draw [fill=ududff] (-2.,5.) circle (1.5pt);
\draw [fill=ududff] (-1.,4.) circle (1.5pt);
\draw [fill=ududff] (-1.,5.) circle (1.5pt);
\draw [fill=ududff] (0.,5.) circle (1.5pt);
\draw [fill=ududff] (0.,4.) circle (1.5pt);
\draw [fill=ududff] (1.,4.) circle (1.5pt);
\draw [fill=ududff] (1.,5.) circle (1.5pt);
\draw [fill=ududff] (2.,4.) circle (1.5pt);
\draw [fill=ududff] (2.,5.) circle (1.5pt);
\draw [fill=ududff] (3.,5.) circle (1.5pt);
\draw [fill=ududff] (3.,4.) circle (1.5pt);
\draw [fill=ududff] (4.,5.) circle (1.5pt);
\draw [fill=ududff] (4.,4.) circle (1.5pt);
\draw [fill=ududff] (5.,5.) circle (1.5pt);
\draw [fill=ududff] (5.,4.) circle (1.5pt);
\end{scriptsize}
\end{tikzpicture}
\end{center}
\vspace*{-5.8cm}
\begin{center}
Title: Illustration of possible paths from $(0,0)$ to $(3,1)$ using the steps\\
$(1,0), (1,1), (2,1), (3,1).$
\end{center}
\end{example}
Notice that for $s=1$ and $s=2$ the obtained triangles are symmetric, unlike the cases where $s > 2$.\\
\subsection{$s$-Pascal triangle versus quasi-$s$-Pascal triangle}
The following result establishes the relation between the quasi $s$-Pascal triangle and $s$-Pascal  triangle 
\begin{theorem}\label{azer} For fixed non negative integers $n, k$ and $s$, we have
\begin{align*}
\binom{n}{k}_{[s]}=\sum_{i}\binom{n-i}{k}\binom{k}{i}_{s-1}.
\end{align*}
\end{theorem}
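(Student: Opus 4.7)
The plan is to prove the identity by a generating-function comparison, exploiting Theorem~\ref{aqw} together with the defining expansion of the bi$^{s-1}$nomial coefficients.

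First I would set $G_k(x):=\sum_{n\geq 0}\left(\sum_i \binom{n-i}{k}\binom{k}{i}_{s-1}\right)x^n$ and show that $G_k(x)=F_k(x)$, where $F_k$ is the generating function computed in Theorem~\ref{aqw}. Swapping the two summations and writing $x^n=x^i\cdot x^{n-i}$ gives
\begin{align*}
G_k(x)=\sum_i \binom{k}{i}_{s-1} x^i \sum_{n\geq 0}\binom{n-i}{k}x^{n-i}.
\end{align*}
The inner sum is, by the classical formula~(2), equal to $\frac{x^k}{(1-x)^{k+1}}$ (shifting the index $n\mapsto n-i$ only amounts to relabeling, since $\binom{n-i}{k}=0$ for $n<i$).

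Next I would identify the remaining factor $\sum_i \binom{k}{i}_{s-1}x^i$ with a product. By the very definition of the bi$^{s-1}$nomial coefficients (relation at the beginning of Section~1.1, applied with $n$ replaced by $k$ and $s$ replaced by $s-1$),
\begin{align*}
\sum_i \binom{k}{i}_{s-1}x^i=(1+x+x^2+\cdots+x^{s-1})^k.
\end{align*}
Substituting this back yields
\begin{align*}
G_k(x)=(1+x+x^2+\cdots+x^{s-1})^k\,\frac{x^k}{(1-x)^{k+1}}=F_k(x),
\end{align*}
which by Theorem~\ref{aqw} is precisely $\sum_{n\geq 0}\binom{n}{k}_{[s]}x^n$. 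Comparing coefficients of $x^n$ establishes the identity.

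There is no real obstacle here; the only mild subtlety is making sure the index of summation in the inner geometric-type sum is legitimately shifted (which is justified by the vanishing convention $\binom{n-i}{k}=0$ for $n<i$) and that the expansion of $(1+x+\cdots+x^{s-1})^k$ is indeed the $s{-}1$ case of the bi$^{s}$nomial generating function rather than the $s$ case, so that the exponent range matches. Once these bookkeeping points are in place, the proof reduces to the factorization of $F_k(x)$ already supplied by Theorem~\ref{aqw}.
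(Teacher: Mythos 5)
Your proof is correct, but it takes a different route from the paper's. The paper starts from the explicit nested-sum formula of Theorem~\ref{po}, namely $\binom{n}{k}_{[s]}=\sum_{j_{1}}\cdots\sum_{j_{s-1}}\binom{k}{j_{1}}\binom{j_{1}}{j_{2}}\cdots\binom{j_{s-2}}{j_{s-1}}\binom{n-\sum_i j_i}{k}$, groups the summation into blocks with $j_1+\cdots+j_{s-1}=i$, and identifies the inner block with $\binom{k}{i}_{s-1}$ via the product identity (4) for bi$^{s}$nomial coefficients. You instead bypass Theorem~\ref{po} entirely and compare generating functions: the factorization $F_k(x)=(1+x+\cdots+x^{s-1})^k\,\frac{x^k}{(1-x)^{k+1}}$ from Theorem~\ref{aqw} matches, term by factor, the generating function of $\sum_i\binom{n-i}{k}\binom{k}{i}_{s-1}$ built from formula (2) and the defining expansion $(1+x+\cdots+x^{s-1})^k=\sum_i\binom{k}{i}_{s-1}x^i$. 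Your version is shorter and needs fewer ingredients (only Theorem~\ref{aqw} and the definition of the bi$^{s-1}$nomial coefficient), while the paper's version makes the combinatorial regrouping explicit and exhibits the identity as a direct consequence of the explicit formula; since Theorem~\ref{po} is itself proved by the same generating-function factorization, the two arguments ultimately rest on the same computation, and your index-shift and degree-range caveats are handled correctly by the paper's vanishing conventions.
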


\begin{proof} We have 
\begin{align*}
\binom{n}{k}_{[s]}=\sum_{j_{1}}\sum_{j_{2}}\cdots\sum_{j_{s-1}}\binom {k}{j_{1}}\binom {j_{1}}{j_{2}}\cdots \binom{j_{s-2}}{j_{s-1}}\binom{n-j_{1}-j_{2}-\cdots-j_{s-1}}{k},\hspace*{1.5cm}\\
\text{considering the summations by blocks} \ j_{1}+j_{2}+\cdots+j_{s-1}=i \ \text{we get}\hspace*{2cm}\\
\binom{n}{k}_{[s]}=\sum_{i}\binom{n-i}{k}\sum_{j_{1}+j_{2}+\cdots+j_{s-1}=i}\binom {k}{j_{1}}\binom {j_{1}}{j_{2}}\cdots \binom{j_{s-2}}{j_{s-1}}=\sum_{i}\binom{n-i}{k}\binom{k}{i}_{s-1}.\hspace*{0.3cm}\\
\end{align*}
\end{proof}
\vspace*{-1.95cm}
\section{Linear recurrence relation and generating function associated to finite transversals of the quasi s-Pascal triangle}
This section is devoted to establish a recurrence relation associated to the sums of the elements lying over  the transversals of direction $(\alpha,r)$ in the quasi $s$-Pascal triangle. In \cite{bzk} we find the details about the concept  of direction in Pascal triangle. The study was extended for the arithmetic triangle, see \cite{ABR,BS1}. We generalize the concept to our triangle as an extension of Theorem \ref{aqwd} to the case where $r\in \Z,\ \alpha \in \N$, $\beta \in\Z^{+}$ with $0\leq \beta<\alpha$ and $r+\alpha>0$. This corresponds to the finite sequences lying over finite transversals of the quasi $s$-Pascal triangle.\\
Let be the sequence
\begin{align*}
 T_{n+1,s}^{(\alpha,\beta,r)}:=\sum_{k}\binom{n-rk}{\beta +\alpha k}_{[s]}, \ \text{with} \ T_{0,s}^{(\alpha,\beta,r)}=0.
\end{align*}
The following figure illustrate the direction $(\alpha,r)=(2,1)$ and  $\beta=0$ on the Tribonacci triangle.
\begin{center}
\begin{tikzpicture}[line cap=round,line join=round,>=triangle 45,x=1.0cm,y=1.0cm]
\clip(-6.405887445887451,6.667012987013018) rectangle (10.78891774891774,16.121558441558467);
\draw (3.8971428571428497,10.545800865800894) node[anchor=north west] {$1$};
\draw (-3.1331601731601793,15.446233766233792) node[anchor=north west] {$1$};
\draw (-3.1331601731601793,14.788225108225136) node[anchor=north west] {$1$};
\draw (-0.120173160173167,13.420259740259768) node[anchor=north west] {$1$};
\draw (-3.1331601731601793,14.060952380952408) node[anchor=north west] {${1}$};
\draw (-2.111515151515158,14.095584415584444) node[anchor=north west] {${3}$};
\draw (-3.115844155844162,13.437575757575786) node[anchor=north west] {${1}$};
\draw (-2.128831168831175,14.82285714285717) node[anchor=north west] {${1}$};
\draw (-2.111515151515158,13.437575757575786) node[anchor=north west] {${5}$};
\draw (-1.0898701298701365,13.437575757575786) node[anchor=north west] {${5}$};
\draw (2.910129870129863,11.325021645021673) node[anchor=north west] {${1}$};
\draw (-1.1937662337662405,12.814199134199162) node[anchor=north west] {${13}$};
\draw (-0.120173160173167,12.848831168831197) node[anchor=north west] {${7}$};
\draw (0.8841558441558371,12.866147186147215) node[anchor=north west] {${1}$};
\draw (-2.111515151515158,12.796883116883144) node[anchor=north west] {${7}$};
\draw (-3.115844155844162,12.76225108225111) node[anchor=north west] {${1}$};
\draw (-1.2110822510822576,12.069610389610418) node[anchor=north west] {${25}$};
\draw (-2.128831168831175,12.069610389610418) node[anchor=north west] {${9}$};
\draw (0.8668398268398199,12.069610389610418) node[anchor=north west] {${9}$};
\draw (-0.25,12.069610389610418) node[anchor=north west] {${25}$};
\draw (-3.1331601731601793,12.034978354978383) node[anchor=north west] {${1}$};
\draw (1.9058008658008585,12.069610389610418) node[anchor=north west] {${1}$};
\draw (1.8365367965367894,10.545800865800894) node[anchor=north west] {${61}$};
\draw (0.7975757575757506,11.307705627705657) node[anchor=north west] {${41}$};
\draw (0.7283116883116814,10.563116883116912) node[anchor=north west] {${129}$};
\draw (-3.115844155844162,11.290389610389639) node[anchor=north west] {${1}$};
\draw (-1.21,10.58043290043293) node[anchor=north west] {${61}$};
\draw (-0.33,10.563116883116912) node[anchor=north west] {${129}$};
\draw (1.819220779220772,11.325021645021673) node[anchor=north west] {${11}$};
\draw (-0.25,11.325021645021673) node[anchor=north west] {${63}$};
\draw (-1.21,11.325021645021673) node[anchor=north west] {${41}$};
\draw (-2.1980952380952443,11.307705627705657) node[anchor=north west] {${11}$};
\draw (-3.098528138528145,10.563116883116912) node[anchor=north west] {${1}$};
\draw (-1.1071861471861537,14.095584415584444) node[anchor=north west] {${1}$};
\draw (-2.1980952380952443,10.597748917748946) node[anchor=north west] {${13}$};
\draw (2.806233766233759,10.545800865800894) node[anchor=north west] {${13}$};
\draw [->,line width=0.1pt] (-2.7522077922077974,15.22112554112557) -- (-1.1764502164502222,15.879134199134228);
\draw [->,line width=0.1pt] (-2.7695238095238146,14.511168831168861) -- (-1.1071861471861526,15.221125541125572);
\draw [->,line width=0.1pt] (-0.7,13.85)--(0.7695238095238146,14.511168831168861)  ;
\draw [->,line width=0.1pt] (1.25,12.55)--(2.5,13.1)  ;
\draw [->,line width=0.1pt] (-2.8387878787878833,13.80121212121212) -- (-1.141818181818187,14.511168831168861);
\draw [->,line width=0.1pt] (-0.7435497835497893,13.177835497835527) -- (0.7975757575757516,13.853160173160203);
\draw [->,line width=0.1pt] (-0.65,12.5) -- (0.8841558441558381,13.14);
\draw [->,line width=0.1pt] (-2.734891774891781,12.5) -- (-1.1,13.1);
\draw [->,line width=0.1pt] (-2.769523809523816,11.740606060606092) -- (-1.2457142857142922,12.346666666666698);

\draw [->,line width=0.1pt] (-2.821471861471868,11.065281385281416) -- (-1.228398268398275,11.671341991342022);
\draw [->,line width=0.1pt] (-0.65,11.85) -- (0.8495238095238024,12.415930735930766);
\draw [->,line width=0.1pt] (-2.8041558441558503,13.10857142857146) -- (-1.124502164502171,13.731948051948084);
\draw (-3.115844155844162,9.87047619047622) node[anchor=north west] {${1}$};
\draw (-2.180779220779227,9.905108225108254) node[anchor=north west] {${15}$};
\draw (-1.21,9.922424242424272) node[anchor=north west] {${85}$};
\draw (-0.3,9.922424242424272) node[anchor=north west] {${231}$};
\draw (0.72,9.905108225108254) node[anchor=north west] {${321}$};
\draw (1.75,9.922424242424272) node[anchor=north west] {${231}$};
\draw (3.87,9.9) node[anchor=north west] {${85}$};
\draw (2.82,9.922424242424272) node[anchor=north west] {${15}$};
\draw (4.884155844155837,9.887792207792238) node[anchor=north west] {${1}$};
\draw [->,line width=0.1pt] (-2.7522077922077983,10.268744588744617) -- (-1.2457142857142922,10.857489177489207);
\draw [->,line width=0.1pt] (-0.7089177489177556,11.08259740259743) -- (0.8148917748917679,11.67134199134202);
\draw [->,line width=0.1pt] (-2.7002597402597464,9.61073593073596) -- (-1.228398268398275,10.13021645021648);
\draw [->,line width=0.1pt] (-0.7089177489177556,10.32069264069267) -- (0.7283116883116814,10.857489177489207);
\draw [->,line width=0.1pt] (1.35,11.05) -- (2.875497835497828,11.67134199134202);
\draw [->,line width=0.1pt] (1.2304761904761834,11.80987012987016) -- (2.754285714285707,12.38129870129873);
\end{tikzpicture}
\end{center}
\vspace*{-3cm}
\begin{center}
Figure 1: Tribonacci triangle.
\end{center}
\begin{theorem}\label{s} For $n \geq \alpha s+r$, $ (T_{n+1,s}^{(\alpha,\beta,r)})_{n}$ satisfies the following linear recurrence
relation
\begin{align}\label{asc}
\sum^{\alpha}_{i=0}(-1)^{i}\binom{\alpha}{i}T_{n-i,s}^{(\alpha,\beta,r)}=\sum^{\alpha(s-1)}_{i=0}\binom{\alpha}{i}_{s-1}T_{n-\alpha-r-i,s}^{(\alpha,\beta,r)},
\end{align}
we can recover the initial conditions $\T_{1,s}^{(\alpha,\beta,r)},\ldots,\T_{\alpha s+r-1,s}^{(\alpha,\beta,r)}$ by $ \sum_{k}\binom{n-rk}{\beta +\alpha k}_{[s]} $.
\end{theorem}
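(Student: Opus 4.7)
The plan is to package the family $(T_{n,s}^{(\alpha,\beta,r)})_{n}$ into a single generating function and exploit the product form of $F_{k}(x)$ provided by Theorem~\ref{aqw}. Set $G(x):=\sum_{n\geq 0}T_{n,s}^{(\alpha,\beta,r)}x^{n}$; since $T_{0,s}^{(\alpha,\beta,r)}=0$, interchanging the summations in the definition and applying the substitution $m=n-1-rk$ converts $G$ into
\begin{align*}
G(x)=x\sum_{k\geq 0}x^{rk}F_{\beta+\alpha k}(x).
\end{align*}
Substituting the closed form $F_{j}(x)=x^{j}(1+x+\cdots+x^{s-1})^{j}/(1-x)^{j+1}$ factors out a $k$-independent prefactor and leaves a geometric series in $k$; the series converges as a formal power series because $x^{\alpha+r}$ has strictly positive valuation (this is where the hypothesis $r+\alpha>0$ is used). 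Summing the geometric series and clearing denominators produces the key functional equation
\begin{align*}
\bigl[(1-x)^{\alpha}-x^{\alpha+r}(1+x+\cdots+x^{s-1})^{\alpha}\bigr]G(x)=x^{\beta+1}(1+x+\cdots+x^{s-1})^{\beta}(1-x)^{\alpha-\beta-1},
\end{align*}
whose right-hand side is an honest polynomial thanks to the constraint $0\leq\beta<\alpha$.

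Identity~(\ref{asc}) is then obtained by reading Taylor coefficients. The operator on the left-hand side expands through two standard identities: $(1-x)^{\alpha}=\sum_{i=0}^{\alpha}(-1)^{i}\binom{\alpha}{i}x^{i}$ by the binomial theorem, and $(1+x+\cdots+x^{s-1})^{\alpha}=\sum_{i=0}^{\alpha(s-1)}\binom{\alpha}{i}_{s-1}x^{i}$ by the very definition of the bi$^{s-1}$nomial coefficients. Extracting $[x^{n}]$ on both sides of the functional equation and rearranging gives
\begin{align*}
\sum_{i=0}^{\alpha}(-1)^{i}\binom{\alpha}{i}T_{n-i,s}^{(\alpha,\beta,r)}-\sum_{i=0}^{\alpha(s-1)}\binom{\alpha}{i}_{s-1}T_{n-\alpha-r-i,s}^{(\alpha,\beta,r)}=[x^{n}]\,x^{\beta+1}(1+x+\cdots+x^{s-1})^{\beta}(1-x)^{\alpha-\beta-1}.
\end{align*}
A direct degree count on the polynomial on the right gives $\alpha+\beta(s-1)$, so its $n$-th coefficient vanishes as soon as $n$ exceeds this value; the stated threshold $n\geq\alpha s+r$ ensures this. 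For $1\leq m\leq\alpha s+r-1$, the initial terms $T_{m,s}^{(\alpha,\beta,r)}$ are read off directly from $\sum_{k}\binom{m-1-rk}{\beta+\alpha k}_{[s]}$, as advertised.

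The only genuinely delicate step is the bookkeeping: properly shifting between $T_{n,s}^{(\alpha,\beta,r)}$ and $T_{n+1,s}^{(\alpha,\beta,r)}$ when moving from the combinatorial definition to $G(x)$, and verifying the degree estimate on the right-hand polynomial so that the cutoff $n\geq\alpha s+r$ really does suffice throughout the admissible range $0\leq\beta<\alpha$, $r+\alpha>0$. Beyond that, the proof reduces to a routine comparison of coefficients in a rational generating function.
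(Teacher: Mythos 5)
Your proof is correct, but it takes a genuinely different route from the paper's. The paper argues termwise: it substitutes the explicit formula (\ref{T1}) into $\sum_{i=0}^{\alpha}(-1)^{i}\binom{\alpha}{i}T_{n-i,s}^{(\alpha,\beta,r)}$, collapses the alternating sum with the identity $\sum_{i=0}^{\alpha}(-1)^{i}\binom{\alpha}{i}\binom{a-i}{b}=\binom{a-\alpha}{b-\alpha}$ of \cite{bzk}, shifts $k\to k-1$, and then reconstitutes the coefficients $\binom{\alpha}{i}_{s-1}$ on the right-hand side by repeated Vandermonde convolutions and a regrouping of the nested indices ``by blocks.'' You instead derive the rational generating function $G(x)=x^{\beta+1}(1+x+\cdots+x^{s-1})^{\beta}(1-x)^{\alpha-\beta-1}\big/\big[(1-x)^{\alpha}-x^{\alpha+r}(1+x+\cdots+x^{s-1})^{\alpha}\big]$ --- which is exactly the paper's later, independently proved generating-function theorem --- and read the recurrence off the denominator via $(1-x)^{\alpha}=\sum_{i}(-1)^{i}\binom{\alpha}{i}x^{i}$ and $(1+x+\cdots+x^{s-1})^{\alpha}=\sum_{i}\binom{\alpha}{i}_{s-1}x^{i}$. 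Your route is shorter, avoids the delicate multi-index bookkeeping, yields the generating-function theorem as a byproduct, and makes the true validity range transparent: the recurrence holds precisely for $n$ exceeding the degree $\alpha+\beta(s-1)$ of the numerator polynomial. One caveat: your closing claim that $n\geq\alpha s+r$ forces $n>\alpha+\beta(s-1)$ amounts to $(\alpha-\beta)(s-1)+r\geq 1$, which is automatic for $r\geq 1$ but can fail for the negative $r$ allowed by the hypotheses $r\in\Z$, $r+\alpha>0$ (e.g.\ $s=1$, $\alpha=2$, $\beta=1$, $r=-1$ gives $T_{n+1,1}^{(2,1,-1)}=\sum_{k}\binom{n+k}{1+2k}$, and the recurrence fails at $n=2\geq\alpha s+r=1$). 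This is a defect of the theorem's stated threshold rather than of your argument --- the paper's own proof glosses over the same point --- and your method in fact pinpoints the sharp bound.
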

For the proof we need the following Lemma.
\begin{lemma}[\cite{bzk}] Let $a$,$b$ and $\alpha$ be non negative integers satisfying the conditions $\alpha\leq a$, then
\begin{align}\label{asc1}
\sum^{\alpha}_{i=0}(-1)^{i}\binom{\alpha}{i}\binom{a-i}{b}=\binom{a-\alpha}{b-\alpha}.
\end{align}
\end{lemma}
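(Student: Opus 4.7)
The plan is to give a short generating-function (snake-oil) proof. Representing the binomial coefficient as a coefficient extraction $\binom{a-i}{b} = [x^b](1+x)^{a-i}$, I would rewrite the left-hand side of (\ref{asc1}) as
\begin{align*}
\sum_{i=0}^{\alpha}(-1)^{i}\binom{\alpha}{i}\binom{a-i}{b}
= [x^{b}]\sum_{i=0}^{\alpha}(-1)^{i}\binom{\alpha}{i}(1+x)^{a-i}.
\end{align*}
Then I would factor out $(1+x)^{a-\alpha}$, which is legitimate because the hypothesis $\alpha\le a$ guarantees a nonnegative exponent, leaving
\begin{align*}
[x^{b}](1+x)^{a-\alpha}\sum_{i=0}^{\alpha}(-1)^{i}\binom{\alpha}{i}(1+x)^{\alpha-i}
= [x^{b}](1+x)^{a-\alpha}\bigl((1+x)-1\bigr)^{\alpha}
= [x^{b}]\,x^{\alpha}(1+x)^{a-\alpha}.
\end{align*}
Shifting the coefficient index then yields $[x^{b-\alpha}](1+x)^{a-\alpha}=\binom{a-\alpha}{b-\alpha}$, which is exactly the right-hand side. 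The algebraic heart of the identity is nothing more than the binomial expansion of $((1+x)-1)^{\alpha}=x^{\alpha}$, and no step looks delicate; the only subtlety is the convention that $\binom{a-\alpha}{b-\alpha}=0$ when $b<\alpha$, which is automatically respected by the coefficient extraction.

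As a backup I would be prepared to run a short induction on $\alpha$ instead: the base case $\alpha=0$ is immediate, and in the inductive step I would split $\binom{\alpha}{i}=\binom{\alpha-1}{i}+\binom{\alpha-1}{i-1}$, apply the induction hypothesis to each of the two resulting sums, and collapse the difference using Pascal's rule $\binom{a-\alpha+1}{b-\alpha+1}-\binom{a-\alpha}{b-\alpha+1}=\binom{a-\alpha}{b-\alpha}$. Either route is completely routine; I do not anticipate a genuine obstacle, and I would favour the generating-function version for its brevity and because it explains why the identity holds (the finite-difference operator of order $\alpha$ applied to $\binom{\,\cdot\,}{b}$ decreases $b$ by $\alpha$).
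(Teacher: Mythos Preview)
Your proposal is correct. The generating-function argument is clean: the identity really does reduce to the binomial expansion of $((1+x)-1)^\alpha = x^\alpha$, and the coefficient extraction handles the boundary case $b<\alpha$ automatically. The induction backup is also fine.

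Note, however, that the paper does not actually prove this lemma. It is quoted verbatim from \cite{bzk} and merely invoked in the proof of Theorem~\ref{s}. So there is no ``paper's own proof'' to compare against; you are supplying a proof where the authors chose to cite one. Your snake-oil computation is essentially the standard way to see this identity (it is the $\alpha$-th forward difference of the polynomial $t\mapsto\binom{t}{b}$ evaluated at $t=a$), and it is entirely self-contained, which is an improvement over a bare citation.
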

\begin{proof} Of Theorem \ref{s},
from Relations (\ref{T1}) and (\ref{asc1}), we get
\begin{align*}
&\sum^{\alpha}_{i=0}(-1)^{i}\binom{\alpha}{i}T_{n-i,s}^{(\alpha,\beta,r)} \\
&=\sum^{\alpha}_{i=0}(-1)^{i}\binom{\alpha}{i}\sum_{k}\binom{n-rk-i-1}{\beta +\alpha k}_{[s]}\\
&=\sum^{\alpha}_{i=0}(-1)^{i}\binom{\alpha}{i}\sum_{k}\sum_{j_{1},j_{2},\ldots, j_{s-1}}\binom {\beta+\alpha k}{j_{1}}\binom {j_{1}}{j_{2}}\ldots \binom{j_{s-2}}{j_{s-1}}\binom{n-rk-i-\sum_{i=1}^{s-1}j_{i}-1}{\beta+\alpha k}\\
&=\sum_{k}\sum_{j_{1},j_{2},\ldots, j_{s-1}}\binom {\beta+\alpha k}{j_{1}}\binom {j_{1}}{j_{2}}\cdots \binom{j_{s-2}}{j_{s-1}}\sum^{\alpha}_{i=0}(-1)^{i}\binom{\alpha}{i}\binom{n-rk-i-\sum_{i=1}^{s-1}j_{i}-1}{\beta+\alpha k}\\
&=\sum_{k}\sum_{j_{1},j_{2},\ldots, j_{s-1}}\binom {\beta+\alpha k}{j_{1}}\binom {j_{1}}{j_{2}}\cdots \binom{j_{s-2}}{j_{s-1}}\binom{n-rk-\alpha-\sum_{i=1}^{s-1}j_{i}-1}{\beta+\alpha(k-1)}\\
&\quad (k^{'}\rightarrow k-1)\\
&=\sum_{k^{'}}\sum_{j_{1},j_{2},\ldots, j_{s-1}}\binom {\beta+\alpha k^{'}+\alpha}{j_{1}}\binom {j_{1}}{j_{2}}\cdots \binom{j_{s-2}}{j_{s-1}}\binom{n-rk^{'}-r-\alpha-\sum_{i=1}^{s-1}j_{i}-1}{\beta+\alpha k^{'}}\\
&=\sum_{k^{'}}\sum_{j_{1},j_{2},\ldots, j_{s-1}}\binom {\beta+\alpha k^{'}+\alpha}{j_{1}}\binom {j_{1}-i_{1}+i_{1}}{j_{2}}\cdots \binom{j_{s-2}-i_{s-2}+i_{s-2}}{j_{s-1}}\times\\
&\times\binom{n-rk^{'}-r-\alpha-\sum_{i=1}^{s-1}j_{i}-1}{\beta+\alpha k^{'}}\\
&\text{by Vandermonde Formula}\\
&=\sum_{k^{'}}\sum_{j_{1},j_{2},\ldots, j_{s-1}}\sum_{i_{1},i_{2},\ldots, i_{s-1}}\binom {\alpha}{i_{1}}\binom {\beta+\alpha k^{'}}{j_{1}-i_{1}}\binom {i_{1}}{i_{2}}\binom{j_{1}-i_{1}}{j_{2}-i_{2}}\cdots \binom{i_{s-2}}{i_{s-1}}\binom{j_{s-2}-i_{s-2}}{j_{s-1}-i_{s-1}}\times\\
&\times\binom{n-rk^{'}-r-\alpha-\sum_{i=1}^{s-1}j_{i}-1}{\beta+\alpha k^{'}}\\
& (l_{v}\rightarrow j_{v}-i_{v})\\
&=\sum_{k^{'}}\sum_{l_{1},l_{2},\ldots, l_{s-1}}\sum_{i_{1},i_{2},\ldots, i_{s-1}}\binom {\alpha}{i_{1}}\binom {\beta+\alpha k^{'}}{l_{1}}\binom {i_{1}}{i_{2}}\binom{l_{1}}{l_{2}}\cdots \binom{i_{s-2}}{i_{s-1}}\binom{l_{s-2}}{l_{s-1}}\times\\
&\times\binom{n-rk^{'}-r-\alpha-\sum_{j=1}^{s-1}i_{j}-\sum_{j=1}^{s-1}l_{j}-1}{\beta+\alpha k^{'}}\\
&\text{we take the summation as by block} \ i_{1}+i_{2}+\cdots+i_{s-1}=i \\
\end{align*}
\begin{align*}
&=\sum_{k^{'}}\sum_{i}\sum_{l_{1},l_{2},\ldots, l_{s-1}}\sum_{i_{1}+i_{2}+\cdots +i_{s-1}=i}\binom {\alpha}{i_{1}}\binom {\beta+\alpha k^{'}}{l_{1}}\binom {i_{1}}{i_{2}}\binom{l_{1}}{l_{2}}\cdots \binom{i_{s-2}}{i_{s-1}}\binom{l_{s-2}}{l_{s-1}}\times\\
&\times\binom{n-rk^{'}-r-\alpha-i-\sum_{j=1}^{s-1}l_{j}-1}{\beta+\alpha k^{'}}\\
&=\sum_{k^{'}}\sum_{i}\sum_{l_{1},l_{2},\ldots, l_{s-1}}\binom {\beta+\alpha k^{'}}{l_{1}}\binom{l_{1}}{l_{2}}\cdots \binom{l_{s-2}}{l_{s-1}}\binom{n-rk^{'}-r-\alpha-i-\sum_{j=1}^{s-1}l_{i}-1}{\beta+\alpha k^{'}}\times\\
&\times\sum_{i}\binom {\alpha}{i_{1}}\binom {i_{1}}{i_{2}}\binom {i_{1}}{i_{2}}\cdots\binom{i_{s-2}}{i_{s-1}}\hspace*{7cm}\\
&=\sum^{\alpha(s-1)}_{i=0}\binom{\alpha}{i}_{s-1}\T_{n-\alpha-r-i,s}^{(\alpha,\beta,r)}.
\end{align*}
\end{proof}
For $\alpha=1,r=1,\beta=0$ we obtain the terms of  $s$-bonacci sequence, for $s=2$, we obtain  Theorem $7$ of \cite{AB}. \\
\begin{example}
For $\alpha=2,r=1,\beta=0$ and $n\geq 2s+1$ we have the following recurrence relation
\begin{align*}
T_{n,s}^{(2,0,1)}&=\sum_{j=1}^{2}(-1)^{j+1}\binom{2}{j}T_{n-j,s}^{(2,0,1)}+\sum_{j=0}^{2(s-1)}\binom{2}{j}_{s-1}T_{n-j-3,s}^{(2,0,1)}\\
&=2T_{n-1,s}^{(2,0,1)}-T_{n-2,s}^{(2,0,1)}+T_{n-3,s}^{(2,0,1)}+2T_{n-4,s}^{(2,0,1)}+\cdots\\
&\cdots+sT_{n-s-2,s}^{(2,0,1)}+\cdots+2T_{n-2s,s}^{(2,0,1)}+T_{n-2s-1,s}^{(2,0,1)},
\end{align*}
as $\binom{2}{s-1}_{s-1}=s$.
\end{example}
The following result establishes the generating function for the sequence $(\T_{n,s}^{(\alpha,\beta,r)})_{n}$ for quasi $s$-Pascal triangles.
\begin{theorem} The generating function of the sequence $\lbrace\T_{n,s}^{(\alpha,\beta,r)}\rbrace_{n\geq0}$ is given by
\begin{align*}
\sum_{n\geqslant0}\T_{n+1,s}^{(\alpha,\beta,r)}x^{n}=\frac{(1-x)^{\alpha-\beta-1}(x+x^{2}+\cdots+x^{s})^{\beta}}{(1-x)^{\alpha}-x^{r+\alpha}(1+x+\cdots+x^{s-1})^{\alpha}}.
\end{align*}
\end{theorem}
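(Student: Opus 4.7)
The plan is to apply Theorem \ref{aqw} directly after interchanging the order of summation. Starting from the definition
\[
\sum_{n\geq 0}\T_{n+1,s}^{(\alpha,\beta,r)}x^{n}=\sum_{n\geq 0}\sum_{k}\binom{n-rk}{\beta+\alpha k}_{[s]}x^{n},
\]
I would swap the two sums and, for each fixed $k$, shift the summation index via $m=n-rk$. Since $\binom{m}{\beta+\alpha k}_{[s]}=0$ for $m<0$, this yields
\[
\sum_{n\geq 0}\T_{n+1,s}^{(\alpha,\beta,r)}x^{n}=\sum_{k}x^{rk}\sum_{m\geq 0}\binom{m}{\beta+\alpha k}_{[s]}x^{m}=\sum_{k}x^{rk}F_{\beta+\alpha k}(x).
\]

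Next I would substitute the closed form from Theorem \ref{aqw}, namely
\[
F_{\beta+\alpha k}(x)=(1+x+\cdots+x^{s-1})^{\beta+\alpha k}\frac{x^{\beta+\alpha k}}{(1-x)^{\beta+\alpha k+1}},
\]
and factor out the terms independent of $k$. Setting
\[
y=\frac{x^{r+\alpha}(1+x+\cdots+x^{s-1})^{\alpha}}{(1-x)^{\alpha}},
\]
the remaining sum over $k$ becomes the geometric series $\sum_{k\geq 0}y^{k}=(1-y)^{-1}$, so
\[
\sum_{n\geq 0}\T_{n+1,s}^{(\alpha,\beta,r)}x^{n}=\frac{x^{\beta}(1+x+\cdots+x^{s-1})^{\beta}}{(1-x)^{\beta+1}}\cdot\frac{(1-x)^{\alpha}}{(1-x)^{\alpha}-x^{r+\alpha}(1+x+\cdots+x^{s-1})^{\alpha}}.
\]

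Finally, I would use the identity $x^{\beta}(1+x+\cdots+x^{s-1})^{\beta}=(x+x^{2}+\cdots+x^{s})^{\beta}$ and combine the powers of $(1-x)$ (noting $\alpha-\beta-1$ in the numerator) to obtain precisely the claimed formula. There is no real obstacle here: the computation is entirely formal, and the only subtlety is justifying the index shift $n\mapsto n-rk$ (which is fine because the summand vanishes for $n<rk$) and the convergence in the formal power series ring of the geometric series in $y$, which holds because $y$ has positive valuation in $x$ (as $r+\alpha>0$).
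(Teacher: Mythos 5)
Your proposal is correct and follows essentially the same route as the paper's own proof: interchange the sums, shift the index by $rk$, substitute the closed form of $F_{\beta+\alpha k}(x)$ from Theorem \ref{aqw}, and sum the resulting geometric series. The only difference is that you explicitly justify the index shift and the formal convergence of the geometric series (via $r+\alpha>0$), which the paper leaves implicit.
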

\begin{proof} We have
\begin{align*}
\sum_{n\geqslant0}\T_{n+1,s}^{(\alpha,\beta,r)}x^{n}&=\sum_{n\geqslant0}\sum_{k}\binom{n-rk}{\beta +\alpha k}_{[s]}x^{n}\\
&=\sum_{n\geqslant qk}\sum_{k}\binom{n-rk}{\beta +\alpha k}_{[s]}x^{n-rk}x^{rk}\\
&=\sum_{k}\frac{(x+x^{2}+\cdots+x^{s})^{\beta+\alpha k}x^{rk}}{(1-x)^{\beta+\alpha k+1}}\\
&=\frac{(x+x^{2}+\cdots+x^{s})^{\beta}}{(1-x)^{\beta+1}}\sum_{k}\left( \frac{(x+x^{2}+\cdots+x^{s})^{\alpha}x^{r}}{(1-x)^{\alpha}}\right)^{k}\\
&=\frac{(x+x^{2}+\cdots+x^{s})^{\beta}}{(1-x)^{\beta+1}}\frac{1}{1-(\frac{x^{r}(x+x^{2}+\cdots+x^{s})^{\alpha}}{(1-x)^{\alpha}})}\\
&=\frac{(1-x)^{\alpha-\beta-1}(x+x^{2}+\cdots+x^{s})^{\beta}}{(1-x)^{\alpha}-x^{r+\alpha}(1+x+\cdots+x^{s-1})^{\alpha}}.
\end{align*}
\end{proof}
\section{The de Moivre summation and other nested sums}
Butler and Karasik see \cite{BK} showed how the binomial coefficient can be written as nested sums, in this section we establish an identity for the quasi bi$^{s}$nomial coefficients $\binom{n}{k}_{[s]}$ equivalent to the de Moivre summation for bi$^{s}$nomial coefficient and we give some other nested sums for the coefficient $\binom{n}{k}_{[s]}$. The following identity is important in the sense  that it gives a simple summation with a product of two binomials. 
\begin{theorem} The following identity holds true
\begin{align} \label{917}
\binom{n}{k}_{[s]}=\sum_{j}(-1)^{j}\binom kj\binom{n+k-sj}{2k}.
\end{align}
\end{theorem}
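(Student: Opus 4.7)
The plan is to prove identity (\ref{917}) by matching generating functions in $x$ (with $k$ fixed), using the closed form for $F_k(x):=\sum_{n\geq 0}\binom{n}{k}_{[s]}x^{n}$ supplied by Theorem \ref{aqw}. Define
\begin{align*}
G_k(x):=\sum_{n\geq 0}\left(\sum_{j}(-1)^{j}\binom{k}{j}\binom{n+k-sj}{2k}\right)x^{n};
\end{align*}
it suffices to show $G_k(x)=F_k(x)$ and then to equate coefficients of $x^n$.

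The first step is to interchange the two summations in $G_k(x)$ and to evaluate each inner generating function. Performing the index shift $m=n+k-sj$ and using Relation~(2) in the form $\sum_{m\geq 0}\binom{m}{2k}x^{m}=x^{2k}/(1-x)^{2k+1}$ -- the terms with $m<2k$ vanish, so nothing is lost by starting $n$ at $0$ -- the inner sum evaluates to $x^{k+sj}/(1-x)^{2k+1}$. Factoring $x^{k}/(1-x)^{2k+1}$ out and applying the binomial theorem to the remaining alternating sum yields
\begin{align*}
G_k(x)=\frac{x^{k}}{(1-x)^{2k+1}}\sum_{j}(-1)^{j}\binom{k}{j}x^{sj}=\frac{x^{k}(1-x^{s})^{k}}{(1-x)^{2k+1}}.
\end{align*}

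To close, the geometric identity $1+x+\cdots+x^{s-1}=(1-x^{s})/(1-x)$ rewrites Theorem \ref{aqw} as
\begin{align*}
F_k(x)=\frac{(1-x^{s})^{k}}{(1-x)^{k}}\cdot\frac{x^{k}}{(1-x)^{k+1}}=\frac{x^{k}(1-x^{s})^{k}}{(1-x)^{2k+1}}=G_k(x),
\end{align*}
and identity (\ref{917}) follows. I do not anticipate any serious obstacle; the only point requiring mild care is justifying the index shift and the extension of the inner summation to all $n\geq 0$, which is automatic because $\binom{n+k-sj}{2k}$ vanishes whenever $n+k-sj<2k$. As a sanity check, a parallel combinatorial route is also available: a path from $(0,0)$ to $(n,k)$ decomposes into $k+1$ horizontal runs of non-negative lengths interlaced with $k$ diagonal steps $L_{b_i}$ with $b_i\in\{1,\ldots,s\}$; setting $c_i=b_i-1$, the count equals the number of compositions of $n-k$ into $2k+1$ parts whose last $k$ parts are bounded above by $s-1$, and inclusion--exclusion on those $k$ upper bounds recovers (\ref{917}) directly.
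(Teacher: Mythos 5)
Your proof is correct and is essentially the paper's own argument run in the opposite direction: both reduce to showing that the generating function of each side equals $x^{k}(1-x^{s})^{k}/(1-x)^{2k+1}$, using Theorem \ref{aqw}, the factorization $1+x+\cdots+x^{s-1}=(1-x^{s})/(1-x)$, the binomial theorem, and the negative binomial series. The only difference is cosmetic (you expand the right-hand side into a generating function, while the paper expands the generating function into the right-hand side), so no further comparison is needed.
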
 
\begin{proof}
By Theorem \ref{aqw} we have
\begin{align*}
\sum_{n\geq0}\binom{n}{k}_{[s]}x^{n}&=\frac{x^{k}(1+x+x^{2}+\cdots+x^{s-1})^{k}}{(1-x)^{k+1}}\\
&=x^{k}(1-x^{s})^{k}\frac{1}{(1-x)^{2k+1}}\\
&=x^{k}\sum_{j}(-1)^{j}\binom kj x^{js}\sum_{i}\binom{i+2k}{2k}x^{i}\\
&=\sum_{n}\sum_{i+sj=n}(-1)^{j}\binom kj\binom{i+2k}{2k}x^{n+k}\\
&=\sum_{n}\sum_{j}(-1)^{j}\binom kj\binom{n-sj+k}{2k}x^{n}.\\
\end{align*}
Identity (\ref{917}) is a dual version of Relation (\ref{js})
\end{proof}
Now we give an identity for $\binom{n}{k}_{[s]}$ dual to Relation (\ref{bb1}).
\begin{theorem}\label{b4} For $w=\exp{(2i\pi/s)}$ we have
\begin{align*}
\binom{n}{k}_{[s]}=\sum_{j}\binom{n-j}{k}(-1)^{j}\sum_{k_{1}+k_{2}+\cdots+k_{s-1}=j}\binom{k}{k_{1}}\binom{k}{k_{2}}\cdots\binom{k}{k_{s-1}}\times w^{-\sum_{r=1}^{s-1}rk_{r}}.
\end{align*}
\end{theorem}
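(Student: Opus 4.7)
The plan is to recognize that the inner sum on the right-hand side is exactly a Belbachir--Benmezai evaluation of a bi$^{s-1}$nomial coefficient. In Relation (\ref{bb1}), replacing $s$ by $s-1$ makes the root of unity become $\exp(2i\pi/s)=w$ and turns it into an expression involving $s-1$ factors of binomial coefficients. Substituting $n\to k$ and $k\to j$ in that replaced identity yields
\begin{align*}
\binom{k}{j}_{s-1}=(-1)^{j}\sum_{k_{1}+k_{2}+\cdots+k_{s-1}=j}\binom{k}{k_{1}}\binom{k}{k_{2}}\cdots\binom{k}{k_{s-1}}w^{-\sum_{r=1}^{s-1}rk_{r}},
\end{align*}
so that multiplying both sides by $(-1)^{j}$ shows that the nested sum appearing (with the $(-1)^{j}$ factor) in the target identity is precisely $\binom{k}{j}_{s-1}$.

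Second, I would substitute this simplification into the right-hand side of the claimed identity. The outer sum collapses to
\begin{align*}
\sum_{j}\binom{n-j}{k}\binom{k}{j}_{s-1},
\end{align*}
and this is exactly the quantity $\binom{n}{k}_{[s]}$ by Theorem \ref{azer}. Concatenating the two steps gives the desired equality.

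There is essentially no hard step in this argument: it is a composition of formula (\ref{bb1}) (applied with shifted parameters so that the primitive $s$-th root of unity $w$ is the relevant one) with the previously proved link between the quasi $s$-Pascal triangle and the $s$-Pascal triangle in Theorem \ref{azer}. The only point requiring care is checking that the substitution in (\ref{bb1}) is consistent --- namely that the number of summation indices drops from $s$ to $s-1$, that the constraint $k_{1}+\cdots+k_{s-1}=j$ corresponds to the choice $k\to j$, and that $a=\exp(2i\pi/((s-1)+1))=w$ matches the root of unity in the statement. Once these match, the identity is immediate, which explains why the author calls it a dual of (\ref{bb1}).
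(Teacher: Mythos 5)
Your proof is correct, but it is not the route the paper's displayed proof takes: the paper works directly with the generating function of Theorem \ref{aqw}, factoring $(1+x+\cdots+x^{s-1})^{k}=\prod_{j=1}^{s-1}(x-w^{j})^{k}$ over the nontrivial $s$-th roots of unity, expanding each factor by the binomial theorem, and extracting the coefficient of $x^{n}$. What you do instead — specializing (\ref{bb1}) with $s\to s-1$ (so that $a=\exp(2i\pi/((s-1)+1))=w$), then $n\to k$, $k\to j$, to identify the inner alternating sum as $\binom{k}{j}_{s-1}$, and feeding this into Theorem \ref{azer} — is exactly the alternative derivation the authors acknowledge in the remark immediately following the theorem (``We can also deduce Theorem \ref{b4} using Theorem \ref{azer} and Identity (\ref{bb1})''). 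Your parameter bookkeeping is right: the number of summation indices drops to $s-1$, the constraint becomes $k_{1}+\cdots+k_{s-1}=j$, and the sign works out since $(-1)^{j}\cdot(-1)^{j}=1$ (your phrase ``multiplying both sides by $(-1)^{j}$'' is a slightly roundabout way of saying the sum equals $(-1)^{j}\binom{k}{j}_{s-1}$, hence the summand with the $(-1)^{j}$ prefactor is $\binom{k}{j}_{s-1}$). The trade-off: your argument is shorter and makes the structural point that the theorem is literally the composition of Theorem \ref{azer} with the Belbachir--Benmezai evaluation, but it leans on (\ref{bb1}) as an imported black box; the paper's generating-function computation is self-contained given Theorem \ref{aqw} and exhibits the root-of-unity factorization explicitly, which is why the authors can call the identity a ``dual'' of (\ref{bb1}).
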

\begin{proof}By Theorem \ref{aqw}, we have 
\begin{align*}
&\sum_{n\geq0}\binom{n}{k}_{[s]}x^{n}\\
&=\frac{x^{k}}{(1-x)^{k+1}}\prod_{j=1}^{s-1}(x-w^{j})^{k}\\
&=\sum_{n\geq0}\binom{n}{k}x^{n}\sum_{k_{1}}(-1)^{k-k_{1}}\binom{k}{k_{1}}w^{k-k_{1}}x^{k_{1}}
\sum_{k_{2}}(-1)^{k-k_{2}}\binom{k}{k_{2}}w^{2(k-k_{2})}x^{k_{2}}\cdots\\
&\cdots\sum_{k_{s-1}}(-1)^{k-k_{s-1}}\binom{k}{k_{s-1}}w^{(s-1)(k-k_{s-1})}x^{k_{s-1}}\\
\end{align*}
\begin{align*}
&=\sum_{n\geq0}\binom{n}{k}x^{n}\sum_{j\geq0}\sum_{k_{1}+k_{2}+\cdots+k_{s-1}=j}\binom{k}{k_{1}}\binom{k}{k_{2}}\cdots\binom{k}{k_{s-1}}\times(-1)^{(s-1)k-j}{w^{\sum_{r=1}^{s-1}r(k-k_{r})}x^{j}}\\
&=\sum_{n\geq0}\sum_{j}\binom{n-j}{k}\sum_{k_{1}+k_{2}+\cdots+k_{s-1}=j}\binom{k}{k_{1}}\binom{k}{k_{2}}\cdots\binom{k}{k_{s-1}}\times(-1)^{(s-1)k-j}w^{\sum_{r=1}^{s-1}r(k-k_{r})}x^{n}.\\
&=\sum_{n\geq0}\sum_{j}\binom{n-j}{k}\sum_{k_{1}+k_{2}+\cdots+k_{s-1}=j}\binom{k}{k_{1}}\binom{k}{k_{2}}\cdots\binom{k}{k_{s-1}}(-1)^{j}w^{-\sum_{r=1}^{s-1}rk_{r}}x^{n},\\
&\text{This yields the result}.
\end{align*}
\end{proof}
\begin{remark}We can also deduce Theorem \ref{b4} using Theorem \ref{azer} and Identity (\ref{bb1}).
\end{remark}
Using the  generating function we obtain the following beautiful nested relation.\\
\begin{theorem} The terms of the $s$-Pascal triangle satisfy the following identity
\begin{align*}
\binom{n}{k}_{[s]}=\sum_{j_{1},j_{2},\ldots,j_{s-1}}\binom{k}{j_{1}}\binom{j_{1}}{j_{2}}\cdots\binom{j_{s-2}}{j_{s-1}}\binom{n-k-\sum_{i=1}^{s-2}j_{i}}{j_{s-1}}\times(2)^{j_{1}}(3/2)^{j_{2}}\cdots(s/s-1)^{j_{s-1}}.
\end{align*}
\end{theorem}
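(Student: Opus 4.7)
The plan is to prove this via the generating function of Theorem \ref{aqw} by inductively peeling off one factor of the polynomial $1+x+\cdots+x^{s-1}$ at a time, producing one summation index $j_i$ per peel and one factor of $(i+1)/i$.

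The core algebraic identity I would first establish is the following. Define, for $0\le i\le s-1$,
\[
Q_i(x)\;:=\;1+\frac{x+x^2+\cdots+x^{s-1-i}}{i+1},
\]
so that $Q_0(x)=1+x+x^2+\cdots+x^{s-1}$ and $Q_{s-1}(x)=1$. A direct computation gives the recursion
\[
Q_i(x)\;=\;(1-x)\;+\;\frac{i+2}{i+1}\,x\,Q_{i+1}(x),\qquad 0\le i\le s-2,
\]
since the right-hand side equals $1-x+\frac{i+2}{i+1}x+\frac{1}{i+1}(x^2+\cdots+x^{s-1-i})=1+\frac{x+\cdots+x^{s-1-i}}{i+1}$. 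The key observation is that the coefficients $(i+2)/(i+1)$ that appear here are exactly the multipliers $2,\,3/2,\,4/3,\,\ldots,\,s/(s-1)$ in the statement.

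Next I would iterate the binomial theorem using this recursion. Starting from $(1+x+\cdots+x^{s-1})^k=Q_0(x)^k$ and expanding $Q_0(x)^k=((1-x)+2x\,Q_1(x))^k$, then expanding each $Q_1^{j_1}$ via $Q_1=(1-x)+\tfrac{3}{2}x\,Q_2$, and so on until $Q_{s-1}=1$ terminates the recursion, the exponents of $(1-x)$ telescope to $k-j_{s-1}$, yielding
\[
(1+x+\cdots+x^{s-1})^k \;=\; \sum_{j_1,\ldots,j_{s-1}}\binom{k}{j_1}\binom{j_1}{j_2}\cdots\binom{j_{s-2}}{j_{s-1}}\,2^{j_1}\!\left(\tfrac{3}{2}\right)^{j_2}\!\cdots\!\left(\tfrac{s}{s-1}\right)^{j_{s-1}}(1-x)^{k-j_{s-1}}\,x^{j_1+\cdots+j_{s-1}}.
\]

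To finish, I would multiply by $x^k/(1-x)^{k+1}$, which by Theorem \ref{aqw} equals $F_k(x)=\sum_{n\ge0}\binom{n}{k}_{[s]}x^n$; each summand becomes a constant multiple of $x^{k+j_1+\cdots+j_{s-1}}/(1-x)^{j_{s-1}+1}$, whose coefficient of $x^n$ is
\[
\binom{n-k-j_1-\cdots-j_{s-1}+j_{s-1}}{j_{s-1}}=\binom{n-k-\sum_{i=1}^{s-2}j_i}{j_{s-1}},
\]
matching the stated formula exactly. The only non-routine step is guessing and verifying the polynomials $Q_i$; once the recursion $Q_i=(1-x)+\frac{i+2}{i+1}xQ_{i+1}$ is in hand, the rest is bookkeeping with the binomial theorem and coefficient extraction.
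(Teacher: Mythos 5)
Your proof is correct and is essentially the paper's own argument run in reverse: the paper takes the generating function of the right-hand side and collapses the nested sums from $j_{s-1}$ inward, recombining at each stage via exactly the identity you isolate as $Q_i(x)=(1-x)+\frac{i+2}{i+1}\,x\,Q_{i+1}(x)$, until it reaches $F_k(x)=x^k(1+x+\cdots+x^{s-1})^k/(1-x)^{k+1}$ from Theorem~\ref{aqw}. Your explicit $Q_i$ recursion is a tidier way to organize that telescoping, but the decomposition, the multipliers $(i+2)/(i+1)$, and the final coefficient extraction coincide with the paper's computation.
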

\begin{proof}We have
\begin{align*}
&\sum_{n\geq0}\sum_{j_{1},j_{2},\ldots,j_{s-1}}\binom{k}{j_{1}}\binom{j_{1}}{j_{2}}\cdots\binom{j_{s-2}}{j_{s-1}}\binom{n-k-\sum_{i=1}^{s-2}j_{i}}{j_{s-1}}(2)^{j_{1}}(3/2)^{j_{2}}\cdots(s/s-1)^{j_{s-1}}x^{n}\\
&=x^{k}\sum_{j_{1}}\binom{k}{j_{1}}(2x)^{j_{1}}\sum_{j_{2}}\binom{j_{1}}{j_{2}}(3x/2)^{j_{2}}\cdots\sum_{j_{s-2}}\binom{j_{s-3}}{j_{s-2}}(x(s-1)/(s-2))^{j_{s-2}}\times\\
&\times\sum_{j_{s-1}}\binom{j_{s-2}}{j_{s-1}}(s/s-1)^{j_{s-1}}\sum_{n\geq0}\binom{n-k-\sum_{i=1}^{s-2}j_{i}}{j_{s-1}}x^{n-k-\sum_{i=1}^{s-2}j_{i}}\\
&=\frac{x^{k}}{1-x}\sum_{j_{1}}\binom{k}{j_{1}}(2x)^{j_{1}}\sum_{j_{2}}\binom{j_{1}}{j_{2}}(3x/2)^{j_{2}}\cdots\sum_{j_{s-2}}\binom{j_{s-3}}{j_{s-2}}(x(s-1)/(s-2))^{j_{s-2}}\times\\
&\times\sum_{j_{s-1}}\binom{j_{s-2}}{j_{s-1}}\left( \frac{(s/(s-1))x}{1-x}\right)^{j_{s-1}}\\
&=\frac{x^{k}}{1-x}\sum_{j_{1}}\binom{k}{j_{1}}(2x)^{j_{1}}\sum_{j_{2}}\binom{j_{1}}{j_{2}}(3x/2)^{j_{2}}\cdots\\
&\cdots\sum_{j_{s-2}}\binom{j_{s-3}}{j_{s-2}}\left(\frac{x(s-1)/(s-2)+x^{2}/(s-2)}{1-x}\right)^{j_{s-2}}\\
&=\frac{x^{k}}{1-x}\sum_{j_{1}}\binom{k}{j_{1}}(2x)^{j_{1}}\sum_{j_{2}}\binom{j_{1}}{j_{2}}(3x/2)^{j_{2}}\cdots\\
&\cdots\sum_{j_{s-3}}\binom{j_{s-4}}{j_{s-3}}\left(\frac{(x(s-2)+x^{2}+x^{3})/(s-3)}{1-x}\right)^{j_{s-3}}\\
\ &\vdots \\
&=\frac{x^{k}}{1-x}\sum_{j_{1}}\binom{k}{j_{1}}\left(\frac{2x+x^{2}+\cdots +x^{s-1}}{1-x}\right)^{j_{1}}\\
&=\frac{x^{k}(1+x+x^{2}+\cdots+x^{s-1})^{k}}{(1-x)^{k+1}}.
\end{align*}
\end{proof}
\section{The $q$-analogue of the quasi s-Pascal triangle}
In this section we define  the $q$-analogue of the quasi $s$-Pascal triangle, we denote by ${n \brack k}_{[s]}$ these coefficients; for that, we give an explicit formula and generating function of ${n \brack k}_{[s]}$, and finally we propose a $q$-deformation for the $s$-bonacci sequence.
\begin{definition}\label{02} We define the $q$-quasi-bi$^{s}$nomial coefficient, according to Relation (\ref{aabc}), as
\begin{align}\label{bn1}
{n \brack k}_{[s]}={n-1 \brack k}_{[s]}+\sum_{j=1}^{s}q^{n-j}{n-j \brack k-1}_{[s]},\hspace*{0.6cm}
\end{align}
or equivalently
\begin{align}\label{bn2}
{n \brack k}_{[s]}=q^{k}{n-1 \brack k}_{[s]}+\sum_{j=1}^{s}q^{(k-1)j}{n-j \brack k-1}_{[s]}.
\end{align}
We use the convention ${0 \brack 0}_{[s]}=1 $ and ${n \brack k}_{[s]}=0$ for $k \notin \lbrace 0,\ldots,n \rbrace$.
\end{definition}
\begin{remark}
For $s=1$ we obtain Relations (\ref{03}) and (\ref{04}) respectively.
\end{remark}
The generating function of ${n \brack k}_{[s]}$ is given by
\begin{theorem}\label{79} Let $\mathbb{F}_{k}(x):=\sum_{n\geq0}{n \brack k}_{[s]}x^{n}$ the generating function of the $q$-quasi bi$^{s}$nomial coefficient, then 
\begin{align*}
\mathbb{F}_{k}(x)=\frac{x^{k}q^{\binom{k}{2}}\prod_{j=0}^{k-1}(1+q^{j}x+(q^{j}x)^{2}+\cdots+(q^{j}x)^{s-1})}{\prod_{j=0}^{k}(1-q^{j}x)}.
\end{align*}
\end{theorem}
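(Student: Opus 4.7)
The plan is to turn the recurrence (\ref{bn1}) into a functional equation for $\mathbb{F}_k(x)$ and then iterate it down to $\mathbb{F}_0(x)$, which is a geometric series.

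First I would multiply the recurrence
$${n \brack k}_{[s]} = {n-1 \brack k}_{[s]} + \sum_{j=1}^{s} q^{n-j} {n-j \brack k-1}_{[s]}$$
by $x^n$ and sum over $n \geq 0$. The first term on the right yields $x\,\mathbb{F}_k(x)$, and for the inner double sum I would shift $m = n-j$ in each summand, which produces the crucial identity
$$\sum_{n \geq 0} q^{n-j} {n-j \brack k-1}_{[s]} x^{n} \;=\; x^{j} \sum_{m \geq 0} {m \brack k-1}_{[s]} (qx)^{m} \;=\; x^{j}\, \mathbb{F}_{k-1}(qx).$$
Collecting over $j=1,\dots,s$ and rearranging then gives the functional equation
$$(1-x)\,\mathbb{F}_{k}(x) \;=\; x(1 + x + x^{2} + \cdots + x^{s-1})\,\mathbb{F}_{k-1}(qx).$$

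Next I would iterate this relation $k$ times. Applying it to $\mathbb{F}_{k-1}(qx)$, the argument becomes $q^{2}x$ and the numerator factor becomes $qx\bigl(1+qx+\cdots+(qx)^{s-1}\bigr)$; after $k$ steps I obtain
$$\mathbb{F}_{k}(x) \;=\; \Biggl(\prod_{j=0}^{k-1} \frac{q^{j} x \bigl(1 + q^{j}x + \cdots + (q^{j}x)^{s-1}\bigr)}{1 - q^{j}x}\Biggr) \mathbb{F}_{0}(q^{k}x).$$
Since ${n \brack 0}_{[s]} = 1$ for all $n \geq 0$ by the convention in Definition~\ref{02}, we have $\mathbb{F}_{0}(y) = 1/(1-y)$, so $\mathbb{F}_{0}(q^{k}x) = 1/(1-q^{k}x)$, which provides the missing factor $(1-q^{k}x)^{-1}$ in the denominator.

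Finally, I would extract the scalar factors: $\prod_{j=0}^{k-1} x = x^{k}$ and $\prod_{j=0}^{k-1} q^{j} = q^{\binom{k}{2}}$. Combining everything yields exactly
$$\mathbb{F}_{k}(x) \;=\; \frac{x^{k}\,q^{\binom{k}{2}}\,\prod_{j=0}^{k-1}\bigl(1 + q^{j}x + (q^{j}x)^{2} + \cdots + (q^{j}x)^{s-1}\bigr)}{\prod_{j=0}^{k}(1-q^{j}x)}.$$
The only nontrivial step is the index shift that transforms $q^{n-j}$ into the dilation $x \mapsto qx$ inside $\mathbb{F}_{k-1}$; everything after that is a routine telescoping. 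A brief sanity check I would include is specializing $s=1$, where the functional equation reduces to $(1-x)\mathbb{F}_{k}(x) = x\,\mathbb{F}_{k-1}(qx)$ and the stated formula collapses to the classical $q$-binomial generating function (\ref{2t}).
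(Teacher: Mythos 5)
Your proposal is correct and follows essentially the same route as the paper: both turn the recurrence (\ref{bn1}) into the functional equation $(1-x)\mathbb{F}_{k}(x)=x(1+x+\cdots+x^{s-1})\mathbb{F}_{k-1}(qx)$ and telescope it $k$ times down to $\mathbb{F}_{0}(q^{k}x)=1/(1-q^{k}x)$. The only cosmetic difference is that the paper accumulates the factors $(1-q^{j}x)$ and $(q^{j}x+\cdots+(q^{j}x)^{s})$ on the left-hand side at each step rather than writing the iterated product at the end.
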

\begin{proof}
We have $\mathbb{F}_{k}(x)=\sum_{n\geq0}{n \brack k}_{[s]}x^{n}$, then using Relation (\ref{bn1})
\begin{align*}
\mathbb{F}_{k}(x)&=\sum_{n\geq0}{n-1 \brack k}_{[s]}x^{n}+\sum_{n\geq0}\sum_{j=1}^{s}q^{n-j}{n-j \brack k-1}_{[s]}x^{n}\\
&=x\sum_{n\geq0}{n \brack k}_{[s]}x^{n}+(x+x^{2}+\cdots+x^{s})\sum_{n\geq0}{n \brack k-1}_{[s]}(qx)^{n},\\
\end{align*}
thus
\begin{align}\label{2210}
\frac{(1-x)\mathbb{F}_{k}(x)}{(x+x^{2}+\cdots+x^{s})}=\sum_{n\geq0}{n \brack k-1}_{[s]}(qx)^{n},
\end{align}
on the other side, using Relation (\ref{bn1}) again
\begin{align*}
&\frac{(1-x)\mathbb{F}_{k}(x)}{(x+x^{2}+\cdots+x^{s})}\\
&=\sum_{n\geq0}{n-1 \brack k-1}_{[s]}(qx)^{n}+\sum_{n\geq0}q^{n-1}{n-1 \brack k-2}_{[s]}(qx)^{n}+\cdots+\sum_{n\geq0}q^{n-s}{n-s \brack k-2}_{[s]}(qx)^{n}\\
&=qx\sum_{n\geq0}{n \brack k-1}_{[s]}(qx)^{n}+(qx+(qx)^{2}+\cdots+(qx)^{s})+\sum_{n\geq0}{n \brack k-2}_{[s]}(q^{2}x)^{n},
\end{align*}
and by Relation (\ref{2210}) we obtain
\begin{align}
\frac{(1-x)(1-qx)\mathbb{F}_{k}(x)}{(x+x^{2}+\cdots+x^{s})(qx+(qx)^{2}+\cdots+(qx)^{s})}=\sum_{n\geq0}{n \brack k-2}_{[s]}(q^{2}x)^{n},
\end{align}
we repeat the process and get\\
\begin{align*}
\frac{(1-x)(1-qx)\cdots(1-q^{k-1}x)\mathbb{F}_{k}(x)}{\prod_{j=0}^{k-1}(q^{j}x+(q^{j}x)^{2}+\cdots+(q^{j}x)^{s}}&=\sum_{n\geq0}{n \brack 0}_{[s]}(q^{k}x)^{n}\\
&=\frac{1}{(1-q^{k}x)},
\end{align*}
and finally we conclude to the result.\\
\end{proof}
\begin{remark} With the same method we can also prove Theorem \ref{79}, using Relation (\ref{bn2}).
\end{remark}
The following result establishes the explicit formula of the $q$-quasi-bi$^{s}$nomial coefficient
\begin{theorem}\label{bn} The coefficient ${n \brack k}_{[s]}$ satisfy
\begin{align}
{n \brack k}_{[s]}=\sum_{j}{n-j \brack k}{k \brack j}^{(s-1)}.
\end{align}
\end{theorem}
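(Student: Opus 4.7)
The plan is to verify the identity at the level of generating functions in $x$ (with $k$ fixed), mimicking the structure of the classical analogue in Theorem \ref{azer} but tracking the extra $q$-weights via the generating functions already established for the ingredients.

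First I would fix $k$ and consider the generating function of the right-hand side,
\[
G_{k}(x):=\sum_{n\geq 0}\left(\sum_{j}{n-j\brack k}{k\brack j}^{(s-1)}\right)x^{n}.
\]
Swapping the order of summation and pulling the factor $x^{j}$ out yields
\[
G_{k}(x)=\sum_{j}{k\brack j}^{(s-1)}x^{j}\sum_{n\geq 0}{n-j\brack k}x^{n-j}.
\]
The inner sum is, by the generating function (\ref{2t}), equal to $x^{k}q^{\binom{k}{2}}/\prod_{i=0}^{k}(1-q^{i}x)$, which is independent of $j$ and can be pulled out. The remaining factor $\sum_{j}{k\brack j}^{(s-1)}x^{j}$ is then recognised as a direct application of the generating function (\ref{gh}) for the $q$-bi$^{s-1}$nomial coefficients, with $n$ replaced by $k$ and $s$ replaced by $s-1$; it therefore equals $\prod_{j=0}^{k-1}\bigl(1+q^{j}x+(q^{j}x)^{2}+\cdots+(q^{j}x)^{s-1}\bigr)$.

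Combining the two factors gives
\[
G_{k}(x)=\frac{x^{k}q^{\binom{k}{2}}\prod_{j=0}^{k-1}\bigl(1+q^{j}x+(q^{j}x)^{2}+\cdots+(q^{j}x)^{s-1}\bigr)}{\prod_{i=0}^{k}(1-q^{i}x)},
\]
which is precisely $\mathbb{F}_{k}(x)$ as computed in Theorem \ref{79}. Extracting the coefficient of $x^{n}$ on both sides gives the announced identity.

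There is no real obstacle here beyond clean bookkeeping: the substitution $n\mapsto k$, $s\mapsto s-1$ when invoking (\ref{gh}) has to be made carefully so that the product indeed runs up to $j=k-1$ with the exponents reaching $s-1$, matching the numerator in $\mathbb{F}_{k}(x)$. The shift of summation index $n-j\to n$ is legitimate because ${m \brack k}=0$ for negative $m$, so no boundary terms are lost and the two power series can be compared term by term.
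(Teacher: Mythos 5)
Your proof is correct and follows essentially the same route as the paper: both compute the generating function of the right-hand side by splitting it into $\sum_{j}{k\brack j}^{(s-1)}x^{j}$ evaluated via (\ref{gh}) and $\sum_{n}{n-j\brack k}x^{n-j}$ evaluated via (\ref{2t}), then match the product against $\mathbb{F}_{k}(x)$ from Theorem \ref{79}. No meaningful differences.
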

\begin{proof} We have
\begin{align*}
\sum_{n\geq0}\sum_{j}{n-j \brack k}{k \brack j}^{(s-1)}x^{n}&=\sum_{j}{k \brack j}^{(s-1)}x^{j}\sum_{n\geq0}{n-j \brack k}x^{n-j}\\
&=\frac{x^{k}q^{\binom{k}{2}}\prod_{j=0}^{k-1}(1+q^{j}x+(q^{j}x)^{2}+\cdots+(q^{j}x)^{s-1})}{\prod_{j=0}^{k}(1-q^{j}x)},
\end{align*}
this last equality comes from Relation (\ref{2t}) and Relation (\ref{gh}).
\end{proof}
Cigler and Carlitz propose the $q$-analogue of the Fibonacci sequence see \cite{CL1,JC1}, the following result establish the recurrence relation for the $q-$analogue of the $s-$bonacci sequence, this generalize s Theorem \ref{aqwd}.
\begin{theorem} Let $\mathbb{T}_{n+1}^{(s)}(x):=\sum_{k}{n-k \brack k}_{[s]}x^{k}$ for $n\geq 0$ and $\mathbb{T}_{0}^{(s)}(x)=0$ then\\
\begin{align}\label{es}
\mathbb{T}_{n+1}^{(s)}(x)=\mathbb{T}_{n}^{(s)}(x)+x\sum_{j=1}^{s}q^{n-j-1}\mathbb{T}_{n-j}^{(s)}(x/q),
\end{align}
and
\begin{align}\label{es1}
\mathbb{T}_{n+1}^{(s)}(x)=\mathbb{T}_{n}^{(s)}(xq)+x\sum_{j=1}^{s}\mathbb{T}_{n-j}^{(s)}(xq^{j}).
\end{align}
\end{theorem}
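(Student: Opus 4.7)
The plan is to derive each identity by substituting one of the two defining $q$-recurrences for ${n\brack k}_{[s]}$ (Relations (\ref{bn1}) and (\ref{bn2})) into the polynomial $\mathbb{T}_{n+1}^{(s)}(x)=\sum_{k}{n-k\brack k}_{[s]}x^{k}$ and repackaging each resulting sum as a shifted $\mathbb{T}$-polynomial. Since the two recurrences are parallel, the two identities will be proved by parallel arguments.

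For (\ref{es}), I would first replace ${n-k\brack k}_{[s]}$ by ${n-k-1\brack k}_{[s]}+\sum_{j=1}^{s}q^{n-k-j}{n-k-j\brack k-1}_{[s]}$, coming from (\ref{bn1}) applied with $n$ replaced by $n-k$. Summing the first piece over $k$ immediately produces $\sum_{k}{n-k-1\brack k}_{[s]}x^{k}=\mathbb{T}_{n}^{(s)}(x)$. In the second piece, I would exchange the order of summation and shift the index via $k'=k-1$. The factor $q^{n-k-j}x^{k}$ then rewrites as $xq^{n-j-1}(x/q)^{k'}$, and the remaining $k'$-sum is $\sum_{k'}{n-j-1-k'\brack k'}_{[s]}(x/q)^{k'}=\mathbb{T}_{n-j}^{(s)}(x/q)$. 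Collecting the $s$ terms gives (\ref{es}).

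For (\ref{es1}), the argument is structurally the same but uses (\ref{bn2}). Inserting ${n-k\brack k}_{[s]}=q^{k}{n-k-1\brack k}_{[s]}+\sum_{j=1}^{s}q^{(k-1)j}{n-k-j\brack k-1}_{[s]}$, the first piece yields $\sum_{k}{n-k-1\brack k}_{[s]}(xq)^{k}=\mathbb{T}_{n}^{(s)}(xq)$. In the second, the substitution $k'=k-1$ turns $q^{(k-1)j}x^{k}$ into $x\,(xq^{j})^{k'}$, and the remaining sum is $\mathbb{T}_{n-j}^{(s)}(xq^{j})$, giving (\ref{es1}).

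The only delicate point is the bookkeeping of the $q$-exponents under the re-indexing $k'=k-1$: in each case one must recognise that $q^{ak+b}x^{k}$ re-expresses as a constant times a monomial in $k'$ with base $xq^{a}$, which is $x/q$ in the first identity (from the $-k$ contribution in $q^{n-k-j}$) and $xq^{j}$ in the second (from the $kj$ contribution in $q^{(k-1)j}$). The usual conventions ${m\brack k}_{[s]}=0$ for $k<0$ or $k>m$ together with $\mathbb{T}_{0}^{(s)}(x)=0$ handle all boundary terms automatically, so no separate base case or range restriction is required.
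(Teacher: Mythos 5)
Your proposal is correct and follows exactly the paper's own route: substitute the recurrence (\ref{bn1}) (resp.\ (\ref{bn2})) for ${n-k\brack k}_{[s]}$ into the defining sum, reindex with $k'=k-1$, and absorb the $q$-powers into the argument of $\mathbb{T}$ as $x/q$ (resp.\ $xq^{j}$). The paper likewise carries out the computation only for (\ref{es}) and notes that (\ref{es1}) is the parallel argument with (\ref{bn2}), so no further comment is needed.
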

\begin{proof} For Relation (\ref{es}), we have 
\begin{align*}
\mathbb{T}_{n+1}^{(s)}(x)=\sum_{k}{n-k \brack k}_{[s]}x^{k},
\end{align*}
then by Relation (\ref{bn1}), we have
\begin{align*}
\mathbb{T}_{n+1}^{(s)}(x)=\sum_{k}{n-k-1 \brack k}_{[s]}x^{k}+\sum_{k}{n-k-1 \brack k-1}_{[s]}x^{k}q^{n-k-1}+\cdots\hspace*{1.5cm}\\
\cdots+\sum_{k}{n-k-s \brack k-1}_{[s]}x^{k}q^{n-k-s}\\
=\sum_{k}{n-k-1 \brack k}_{[s]}x^{k}+x\sum_{k^{'}}{n-k^{'}-2 \brack k^{'}}_{[s]}x^{k^{'}}q^{n-k^{'}-2}+\cdots \hspace*{0.9cm}\\
\hspace*{0.7cm}\cdots+x\sum_{k^{'}}{n-k^{'}-s-1 \brack k^{'}}_{[s]}x^{k^{'}}q^{n-k^{'}-s-1}\\
=\mathbb{T}_{n}^{(s)}(x)+x\sum_{j=1}^{s}q^{n-j-1}\mathbb{T}_{n-j}^{(s)}(x/q).\hspace*{5cm}
\end{align*}
\end{proof}
The proof is the same for the  Relation (\ref{es1}), we use Relation (\ref{bn2}).

\end{document}